\documentclass[11pt,reqno]{amsart}
\usepackage[T1]{fontenc}
\usepackage{lmodern}

\usepackage{amsmath,amssymb,amsthm,mathtools}
\usepackage{enumitem}
\usepackage[expansion=false]{microtype}
\usepackage{tikz}
\usepackage{cite}
\usepackage{float}
\usepackage{needspace}
\usepackage[colorlinks=true,linkcolor=blue,citecolor=blue,urlcolor=blue]{hyperref}
\hypersetup{
 pdftitle={Generalized Cesaro Operators Between Hardy Spaces},
 pdfauthor={Pengcheng Tang}
}

\newtheorem{theorem}{Theorem}[section]
\newtheorem{proposition}[theorem]{Proposition}
\newtheorem{lemma}[theorem]{Lemma}
\newtheorem{corollary}[theorem]{Corollary}

\newtheorem{example}[theorem]{Example}

\newcommand{\D}{\mathbb D} 
\newcommand{\T}{\mathbb T}

\newcommand{\C}{\mathbb C}

\newcommand{\norm}[1]{\left\lVert #1\right\rVert}

\newcommand{\Hol}{\operatorname{Hol}}

\title[Generalized Ces\`aro operators between Hardy spaces]
{Complete mapping criteria for generalized Ces\`{a}ro operators between Hardy spaces}

\author{Pengcheng Tang*}
\address{School of Mathematics and Statistics, Hunan University of Science and Technology, Xiangtan, Hunan 411201, China}
\email{www.tang-tpc.com@foxmail.com}

\author{Huayou Xie}
\address{School of Financial Mathematics and Statistics, Guangdong University of
Finance, Guangzhou, Guangdong 510521,  China}
\email{xiehy@gduf.edu.cn }

\subjclass[2020]{Primary 47B91; Secondary 30H10}
\keywords{Ces\`aro operator, Hardy space, Carleson measure, Coefficient multiplier
\\
\ \ \ \ \ \  \ \ \ \ $^*$Corresponding Author.
}

\begin{document}

\begin{abstract}
Let $\mu$ be a finite positive Borel measure on $[0,1)$ and let
$\gamma>0$. We establish sharp mapping criteria for the generalized
Ces\`aro operator
\[
  \mathcal C_{\mu,\gamma}f(z)
 =\sum_{n=0}^\infty \mu_n
 \left(\sum_{k=0}^n
 \frac{\Gamma(n-k+\gamma)}{\Gamma(\gamma)(n-k)!}a_k\right)z^n,\qquad z\in\D,
\]
between Hardy spaces, including the source and target $H^\infty$
endpoints. For $0<p<q<\infty$, for
$0<p=q<1$, and for $0<p\le1$ with $q=\infty$, the boundedness  of $ \mathcal C_{\mu,\gamma}: H^p \to H^q$ is
equivalent to $\mu$ being a $(\gamma+1/p-1/q)$-Carleson measure,
without further restrictions on $\gamma$. For $1\le q<p\le\infty$,
set $1/r=1/q-1/p$. In this range, boundedness
and compactness are equivalent to
\[
 \int_0^1
 \left(\frac{\mu([t,1))}{(1-t)^{\gamma-1/r}}\right)^r
 \frac{dt}{1-t}<\infty.
\]
This condition is also equivalent to $F_{\mu,\gamma}\in H^r$ and to
$\sum_{n\ge0}(n+1)^{r\gamma-2}\mu_n^r<\infty$, where
$F_{\mu,\alpha}=\mathcal C_{\mu,\alpha}(1)$ and
$\mu_n=\int_{[0,1)}t^n\,d\mu(t)$.
For $1<p<\infty$, boundedness and compactness from $H^p$ to
$H^\infty$ are characterized by the shifted condition
$F_{\mu,\gamma+1}\in H^{p'}$, where $p'=p/(p-1)$. We therefore obtain a complete boundedness classification of the
generalized Ces\`aro operators $\mathcal C_{\mu,\gamma}$ between
Hardy spaces $H^p$ and $H^q$ for the full range
$0<p,q\le\infty$.
\end{abstract}

\maketitle

\section{Introduction and main results}

Let $\D=\{z\in\C:|z|<1\}$ and let $\Hol(\D)$ denote the space of analytic functions in $\D$. We write $\T=\partial\D$, with normalized arc measure $d\theta/(2\pi)$.

Throughout the paper, $1/\infty=0$ and $\frac{1}{p'}+\frac{1}{p}=1$ when
$1\leq p\leq\infty$.
The notation $X\lesssim Y$ means that $X\le CY$ for a positive constant
independent of the functions, measures, and indices under consideration;
its dependence on the fixed parameters will be indicated when needed.
We write $X\asymp Y$ when both $X\lesssim Y$ and $Y\lesssim X$ hold.

For $0<p<\infty$, the Hardy space $H^p$ consists of those $f\in\Hol(\D)$ such that
$$
||f||_{p}:=\sup_{0\leq \rho<1} M_p(\rho, f)<\infty,
$$
where
$$
M_p(\rho, f)= \left(\frac{1}{2\pi}\int_0^{2\pi}|f(\rho e^{i\theta})|^p d\theta \right)^{1/p}, \ 0<p<\infty,
$$
$$M_{\infty}(\rho, f)=\sup_{|z|=\rho}|f(z)|.$$
For background on Hardy spaces, we refer to \cite{Duren1970}.

The classical Ces\`{a}ro operator $\mathcal {C}$  is defined in $\Hol(\D)$ as follows: If $f(z)=\sum_{n=0}^\infty a_nz^n\in \Hol(\D)$, then
 $$
\mathcal {C}(f)(z)=\sum_{n=0}^\infty\left(\frac{1}{n+1}\sum_{k=0}^n a_k\right)z^n, \  z\in \D.
$$

Its boundedness on $H^p$ for $0<p<\infty$, has been established by several
methods. Siskakis \cite{Siskakis1987,Siskakis1990} developed a
semigroup approach and a direct argument at $p=1$, while Miao
\cite{Miao1992} treated $0<p<1$.  Nowak \cite{no} provided another proof valid for all $0<p<\infty$. These results motivate the study of Ces\`aro-type operators obtained
by replacing the classical averaging coefficients with a general
sequence. A natural question is how the decay and regularity of this
sequence govern the boundedness of the resulting operators between
spaces of analytic functions.

Let $\mu$ be a finite positive Borel measure on $[0,1)$, with moments
\[
 \mu_n=\int_{[0,1)}t^n\,d\mu(t),\qquad n\ge0.
\]

Galanopoulos, Girela and Merch\'an \cite{GGM2022} introduced the
measure-induced operator $\mathcal C_\mu$ by replacing $1/(n+1)$ in the
classical coefficient formula by $\mu_n$. The action of $\mathcal C_\mu$ between distinct spaces of analytic functions has been extensively studied in recent years. See, for instance, \cite{03,ces6,bel,baoo1,blas2}. Many known characterizations of the boundedness  and compactness  of  $\mathcal C_\mu$ involve Carleson-type measures.

In this paper, we consider the more general family introduced by Bao, Sun and Wulan
\cite{BaoSunWulan2022}. For $\gamma>0$, put
\[
 A_n^\eta=\frac{\Gamma(n+\eta)}{\Gamma(\eta)\Gamma(n+1)},
 \qquad n\ge0,\quad\eta>0,
\]
and define
\begin{equation*}\label{eq:def-Cmug}
 \mathcal C_{\mu,\gamma}f(z)
 =\sum_{n=0}^{\infty}\mu_n
   \left(\sum_{k=0}^{n}A_{n-k}^\gamma a_k\right)z^n =\int_{[0,1)}\frac{f(tz)}{(1-tz)^\gamma}\,d\mu(t),
 \qquad z\in\mathbb D.
\end{equation*}
 We write
$\mathcal C_{\mu,1}=\mathcal C_\mu$. Thus $d\mu(t)=dt$ and $\gamma=1$
recover $\mathcal C$, whereas the choice
$d\mu(t)=\gamma(1-t)^{\gamma-1}\,dt$ gives the Ces\`aro averaging operator
\[
 C^\gamma f(z)
 =\gamma\int_0^1
   \frac{f(tz)(1-t)^{\gamma-1}}{(1-tz)^\gamma}\,dt.
\]
The averaging family was studied by Stempak \cite{Stempak1994} and
Andersen \cite{Andersen1996}. See also \cite{ces13,ste,gat,xiao}. 
In particular, Andersen \cite{Andersen1996} proved  that $C^\gamma$ is bounded
on  $H^p$, $0<p<\infty$, for every $\gamma>0$.
This result will be used in the coefficient factorizations below.

For $s>0$, the measure $\mu$ is called an $s$-Carleson measure if
\begin{equation*}\label{eq:s-carleson}
 \mu([t,1))\lesssim(1-t)^s,\qquad 0\le t<1.
\end{equation*}
The case $s=1$ is the ordinary Carleson condition.

Bao, Sun and Wulan
\cite{BaoSunWulan2022} studied the range of
$\mathcal C_{\mu,\gamma}$ on $H^\infty$ in spaces lying between
appropriate mean Lipschitz spaces and the Bloch space.
Related mapping problems have been considered between the Bloch and
Bergman spaces \cite{GuoTangZhang2024}, between weighted Bergman spaces
\cite{GalanopoulosSiskakisZhao2025}, and with analytic Besov target spaces
\cite{Tang2025}. Blasco and Mas \cite{BlascoMas2026} developed a
systematic treatment on mixed norm spaces.

In \cite{GGM2022}, the authors proved
that, for $1\le p<\infty$, $\mathcal C_\mu$ is bounded on $H^p$
if and only if $\mu$ is a Carleson measure. They also characterized
boundedness on $H^\infty$ by
$\int_{[0,1)}(1-t)^{-1}\,d\mu(t)<\infty$.
Blasco \cite{Blasco2024} studied Ces\`aro-type operators induced by
complex Borel measures and obtained further results between distinct
Hardy spaces. For positive measures, these include the characterization
of $\mathcal C_\mu:H^1\to H^q$, $1\le q\le\infty$, by the
$(2-1/q)$-Carleson condition
\cite[Corollaries~4.6 and~4.7]{Blasco2024}.
At the other source endpoint, for $1\le q\le2$, he proved
\[
 \mathcal C_\mu:H^\infty\to H^q\ \text{is bounded}
 \quad\Longleftrightarrow\quad
 \sum_{n=0}^{\infty}(n+1)^{q-2}\mu_n^q<\infty;
\]
see \cite[Corollary~4.11]{Blasco2024}.
The same work gives a sufficient moment condition in the range
$1\le q<p<\infty$ \cite[Theorem~3.15(ii)]{Blasco2024}.
These results provide the relevant comparisons for the endpoint and
lower-triangle characterizations established here.

Motivated by these results, we seek sharp mapping criteria for
$\mathcal C_{\mu,\gamma}$ between distinct Hardy spaces, with
$\gamma>0$ arbitrary. The contrast between the diagonal Carleson
criterion and the endpoint moment conditions raises a basic question:
is boundedness determined by the decay of $\mu([t,1))$ as $t\to 1^-$, or does it require additional summability? In particular, the sufficient condition available for
$0< q<p<\infty$ calls for a necessary and sufficient replacement
that identifies the precise dependence on the Hardy exponents.
The endpoint results lead to two further questions: whether the
mapping problem from $H^\infty$ can be treated uniformly for all
finite  exponents $q\ge1$, and whether the criteria for finite
targets extend to $H^p\to H^\infty$ when $1<p<\infty$.
We investigate these questions within the generalized family,
examining how the kernel order interacts with the source and target
exponents. Alongside boundedness, we determine whether compactness
requires a separate vanishing condition or follows from the
summability inherent in the boundedness criterion.

Our main results distinguish three mapping regimes.
For $0<p<q<\infty$,  $0<p=q<1$ and also for $0<p\le1$ with
$q=\infty$, boundedness is characterized by the
$(\gamma+1/p-1/q)$-Carleson condition for every $\gamma>0$.
Our first theorem addresses this upper-triangular case.

\begin{theorem}\label{thm:upper-improved}
Let $\gamma>0$ and let $\mu$ be a finite positive Borel measure on
$[0,1)$. Assume one of the following:
\begin{enumerate}[label=\textup{(\alph*)},leftmargin=*]
\item $0<p<q<\infty$;
\item $0<p=q<1$;
\item $0<p\le1$ and $q=\infty$.
\end{enumerate}
Put $ s=\gamma+\frac1p-\frac1q.$ Then
$\mathcal C_{\mu,\gamma}:H^p\longrightarrow H^q$
is bounded if and only if $\mu$ is an $s$-Carleson measure. Moreover,
\begin{equation}\label{eq:upper-norm-equivalence}
 \norm{\mathcal C_{\mu,\gamma}}_{H^p\to H^q}
 \asymp
 \sup_{0\le t<1}\frac{\mu([t,1))}{(1-t)^s},
\end{equation}
where the comparison constants depend only on $p$, $q$ and $\gamma$.
\end{theorem}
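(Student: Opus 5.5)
Both directions go through the integral representation
\[
\mathcal C_{\mu,\gamma}f(z)=\int_{[0,1)}\frac{f(tz)}{(1-tz)^\gamma}\,d\mu(t).
\]

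\emph{Necessity.} I test on the normalized kernels $f_a(z)=(1-a)^{\beta-1/p}(1-az)^{-\beta}$ with $a\in[0,1)$ and $\beta>1/p$ fixed large, so that $\|f_a\|_{H^p}\asymp1$. For $z=r\in(0,1)$ every term in the integrand is positive, so
\[
\mathcal C_{\mu,\gamma}f_a(r)\ge (1-a)^{\beta-1/p}\int_{[a,1)}\frac{d\mu(t)}{(1-atr)^{\beta}(1-tr)^{\gamma}}.
\]
The coefficients of $\mathcal C_{\mu,\gamma}f_a$ are nonnegative, so $M_q(\rho,\cdot)$ dominates a pointwise value up to a factor. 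Concretely, I use the growth estimate $|g(z)|\lesssim\|g\|_{H^q}(1-|z|)^{-1/q}$, valid for all $0<q\le\infty$. Evaluating at $r=a$ and noting $1-atr\asymp 1-a$ for $t\ge a$ gives
\[
(1-a)^{-1/p-\gamma}\,\mu([a,1))\lesssim \|\mathcal C_{\mu,\gamma}\|\,(1-a)^{-1/q}.
\]
This is exactly $\mu([a,1))\lesssim\|\mathcal C_{\mu,\gamma}\|(1-a)^{s}$, and it holds in all three cases (a)--(c).

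\emph{Sufficiency.} Assume $\mu$ is $s$-Carleson with constant $K$. The pointwise estimate $|f(tz)|\lesssim\|f\|_p(1-t|z|)^{-1/p}$ alone is too lossy, so I instead split $f$ against the kernel. A layer-cake argument gives $\int_{[0,1)} h(t)\,d\mu(t)\lesssim K\int_0^1 h(t)(1-t)^{s-1}dt$ for nonnegative increasing-type $h$, such as $t\mapsto |f(tz)|\,|1-tz|^{-\gamma}$ after replacing $|f(tz)|$ by its radial maximal function. So the plan is to reduce to the model measure $d\nu_s(t)=(1-t)^{s-1}dt$, i.e.\ to show that $\mathcal C_{\nu_s,\gamma}$, suitably majorized, maps $H^p\to H^q$.

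The reduction is made rigorous by integrating by parts in $t$:
\[
\mathcal C_{\mu,\gamma}f(z)=\mu([0,1))f(0)+\int_0^1 \mu([t,1))\,\partial_t\!\left[\frac{f(tz)}{(1-tz)^\gamma}\right]dt .
\]
This yields the majorant
\[
|\mathcal C_{\mu,\gamma}f(z)|\lesssim K\int_0^1(1-t)^{s}\,\bigl(|z||f'(tz)|\,|1-tz|^{-\gamma}+|f(tz)|\,|1-tz|^{-\gamma-1}\bigr)\,dt .
\]
Both pieces are fractional-integration-type operators of order $s-\gamma=1/p-1/q$ applied along radii. In case (a), the known Hardy--Littlewood theorem applies: fractional integration of order $1/p-1/q$ maps $H^p\to H^q$, via Flett's inequalities or the coefficient-multiplier description using $C^\gamma$ from Andersen. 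Cases (b) and (c) are the order-$0$ and order-$1/p$ analogues. For $p<1$, I would use the atomic decomposition of $H^p$ (or test against the kernels $f_a$ plus subharmonicity) to verify the bound on atoms. For $q=\infty$ with $p\le1$, the estimate $|f(w)|\lesssim(1-|w|)^{-1/p}$ and $\int_0^1|f'(tz)|(1-t)^{1/p}dt\lesssim\|f\|_p$ (Hardy--Littlewood for $p\le1$) give a uniform bound.

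\emph{Main obstacle.} The main obstacle is this last step: controlling the kernel factor $|1-tz|^{-\gamma}$ off the radius, uniformly for arbitrary $\gamma>0$. Here I would invoke the boundedness of $C^\gamma$ on $H^p$. The point is that $\mathcal C_{\nu,\gamma}$ with $d\nu=(1-t)^{s-1}dt$ factors as $C^\gamma$ composed with a fractional derivative/integral of order $1/p-1/q$, computed coefficientwise via $\Gamma$-function asymptotics. This reduces the problem to the classical Hardy--Littlewood embedding. The norm equivalence \eqref{eq:upper-norm-equivalence} then follows, since both directions are linear in $K$.
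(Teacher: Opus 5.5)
Your necessity argument is correct and is the paper's own (the paper takes $\beta=2/p$, evaluates at $z=a$ and restricts to $[a,1)$). The gap is in sufficiency, at the step that is supposed to carry the proof.

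\textbf{Why the reduction to $\nu_s$ fails.} Since $f(tz)(1-tz)^{-\gamma}$ is complex-valued, the $s$-Carleson condition can only be used after taking absolute values (layer cake) or after your integration by parts. Either way, what you must bound is a \emph{positive} majorant. Your last paragraph instead bounds the \emph{analytic} model operator $\mathcal C_{\nu_s,\gamma}=T_\lambda C^\gamma$, where $\lambda_n\asymp(n+1)^{-\delta}$ and $\delta=1/p-1/q$. The Hardy--Littlewood theorem concerns the multiplier $J_\delta$, not $\int_0^1(1-t)^{\delta-1}|f(tz)|\,dt$. No positivity argument transfers a bound from $\nu_s$ to a general $s$-Carleson $\mu$; such a transfer can only be done on the coefficients.

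\textbf{The natural bound for your majorant only works for $p\le1$.} That bound is $|1-tz|\ge 1-t$, then Minkowski, then $\int_0^1(1-t)^{\delta-1}M_q(t,f)\,dt\lesssim\|f\|_{H^p}$. This is exactly the paper's proof for $p\le1\le q$ via Lemmas~\ref{lem:tail-comparison} and~\ref{lem:HL-small-p}. However, that inequality fails for every $p>1$: test it on $f(z)=(1-z)^{-1/p}(\log\frac{e}{1-z})^{-1}$. Similarly, in (c) the growth bound $|f(w)|\lesssim(1-|w|)^{-1/p}$ leaves a non-integrable $(1-t)^{-1}$ in your $f$-term. You need $\int_0^1(1-t)^{1/p-1}M_\infty(t,f)\,dt\lesssim\|f\|_{H^p}$ there instead.

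\textbf{The missing ideas.} For $1<p<q<\infty$ the paper never compares with a model measure. It writes $\mathcal C_{\mu,\gamma}=J_\delta T_b C^\gamma$ with the actual moments: $b_n=w_n\mu_n$, where $w_n=(n+1)^\delta A_n^{\gamma+1}\asymp(n+1)^s$. Since $w_n$ increases and $\mu_n$ decreases, $\sum_{n=N}^{2N-2}|b_{n+1}-b_n|\le 2w_{2N-1}\mu_N\lesssim\mathfrak C_s(\mu)$, so the Marcinkiewicz multiplier theorem applies.

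Alternatively, your own majorant can be rescued for all $0<p<q<\infty$ by a Hedberg-type estimate. Start from $|f(t\rho e^{i\theta})|\le\min\{f^*(\theta),\,C\|f\|_{H^p}(1-t)^{-1/p}\}$ and split the integral at $1-t=(\|f\|_{H^p}/f^*(\theta))^p$. This gives $\int_0^1(1-t)^{\delta-1}|f(t\rho e^{i\theta})|\,dt\lesssim f^*(\theta)^{p/q}\|f\|_{H^p}^{1-p/q}$, whose $L^q$ norm is $\lesssim\|f\|_{H^p}$. The $f'$-term is handled the same way, using Cauchy estimates and a nontangential maximal function.

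\textbf{Case (b) is left essentially open.} Here $\delta=0$, and after $|1-tz|\ge1-t$ your $f$-term becomes $\int_0^1(1-t)^{-1}|f(tz)|\,dt$, which is infinite already for $f\equiv1$. So the off-radius decay of the kernel is indispensable. An unverified appeal to atomic decomposition does not supply it, and kernel testing only yields necessity.

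For $0<p\le q<1$ the paper follows Stempak:
\begin{enumerate}[label=\textup{(\arabic*)},leftmargin=*]
\item split $\mu$ dyadically and use $q$-subadditivity;
\item set $h=f(1-z)^{-\gamma}$ and bound $\sup_{t\le a_{k+1}}|h(\rho te^{i\theta})|$ in $L^q$ by the radial maximal function of a dilation;
\item absorb $|1-te^{i\theta}|^{-\gamma q}$ by H\"older's inequality with $\gamma qB>1$;
\item conclude with the Hardy--Littlewood mixed-norm inequality with $\lambda=q\ge p$.
\end{enumerate}
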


The proof treats $0<p\le1$ and $1<p<q<\infty$
by different sufficiency arguments, but uses the same necessity argument in both ranges.
In the classical case $\gamma=1$, Theorem \ref{thm:upper-improved}, together with the known diagonal theorem  \cite[Theorem 1]{GGM2022}, implies that for $0< p\le q<\infty$,
\begin{equation*}\label{eq:upper-classic}
 \mathcal C_\mu:H^p\to H^q\text{ is bounded}
 \quad\Longleftrightarrow\quad
 \mu\text{ is a }\left(1+\frac1p-\frac1q\right)\text{-Carleson measure}.
\end{equation*}

Our second main theorem treats the complementary region $0< q<p\le\infty$.  The  normalized tail
\[
 t\longmapsto
 \frac{\mu([t,1))}{(1-t)^{\gamma+1/p-1/q}}
\]
must instead belong to $L^r((0,1),dt/(1-t))$, where
$1/r=1/q-1/p$. This criterion includes $p=\infty$ by taking $r=q$,
and has equivalent generating-function, boundary-potential, and moment
formulations.
Write
\begin{equation}\label{eq:Fmugamma}
 F_{\mu,\gamma}(z)
 :=\mathcal C_{\mu,\gamma}(1)(z)
 =\int_0^1\frac{d\mu(t)}{(1-tz)^\gamma}
 =\sum_{n=0}^\infty A_n^\gamma\mu_nz^n.
\end{equation}
If $\gamma=1$, we write $F_{\mu,1}=F_{\mu}$.

\begin{theorem}\label{thm:lower-main}
Let $0< q<p\le\infty$, let $\gamma>0$, and let $\mu$ be a finite
positive Borel measure on $[0,1)$. Define
\begin{equation*}\label{eq:r-sigma}
 \frac1r=\frac1q-\frac1p,
 \qquad
 \sigma=\gamma-\frac1r=\gamma+\frac1p-\frac1q,
 \qquad \frac1\infty=0.
\end{equation*}
Thus $0<r<\infty$, $r=pq/(p-q)$ for $p<\infty$, and $r=q$ for
$p=\infty$. For $j\ge0$ put
\[
 a_j=1-2^{-j},\qquad m_j=\mu([a_j,1)),\qquad d_j=2^{j\sigma}m_j.
\]
Then the following assertions are equivalent:
\begin{enumerate}[label=\textup{(\roman*)}]
 \item $\mathcal C_{\mu,\gamma}:H^p\to H^q$ is bounded;
 \item $\mathcal C_{\mu,\gamma}:H^p\to H^q$ is compact;
 \item $\{d_j\}_{j\ge0}\in\ell^r$;
 \item
 \begin{equation}\label{eq:tail-integral-main}
  \int_0^1
  \left(\frac{\mu([t,1))}{(1-t)^\sigma}\right)^r
  \frac{dt}{1-t}<\infty;
 \end{equation}
 \item the boundary potential
 \begin{equation*}\label{eq:P-mugamma}
  P_{\mu,\gamma}(e^{i\theta})
  :=\int_0^1\frac{d\mu(t)}{|1-te^{i\theta}|^\gamma}
 \end{equation*}
 belongs to $L^r(\T)$;
 \item $F_{\mu,\gamma}\in H^r$;
 \item
 \begin{equation*}\label{eq:lower-moment}
  \sum_{n=0}^\infty(n+1)^{r\gamma-2}\mu_n^r<\infty.
 \end{equation*}
\end{enumerate}
Moreover,
\begin{align}\label{eq:norm-equivalence-lower}
 \norm{\mathcal C_{\mu,\gamma}}_{H^p\to H^q}
 &\asymp\norm{F_{\mu,\gamma}}_{H^r}
 \asymp\norm{P_{\mu,\gamma}}_{L^r(\T)}\notag\\
 &\asymp\left(\sum_{j=0}^\infty d_j^r\right)^{1/r}
 \asymp\left(\sum_{n=0}^\infty(n+1)^{r\gamma-2}\mu_n^r\right)^{1/r}\notag\\
 &\asymp
 \left[\mu([0,1))^r+
 \int_0^1
 \left(\frac{\mu([t,1))}{(1-t)^\sigma}\right)^r
 \frac{dt}{1-t}\right]^{1/r}.
\end{align}
The comparison constants depend only on $p,q,\gamma$.
\end{theorem}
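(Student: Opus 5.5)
We will prove Theorem~\ref{thm:lower-main} in two parts. First we show that the measure-theoretic conditions (iii)--(vii) are equivalent, with comparable quantities. Then we tie them to the operator conditions (i)--(ii): the chain (iii)$\Rightarrow$(ii)$\Rightarrow$(i) is the sufficiency direction, and (i)$\Rightarrow$(vi) the necessity direction.

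\emph{Equivalence of (iii)--(vii).} (iii)$\Leftrightarrow$(iv) is elementary. On $[a_j,a_{j+1})$ we have $1-t\asymp 2^{-j}$ and $\mu([t,1))$ is squeezed between $m_{j+1}$ and $m_j$. Hence the integral is comparable to $\sum_j d_j^r$, up to a shift in $j$ that is harmless since $d_{j+1}\lesssim d_j$-type comparisons are not needed. For (iv)$\Leftrightarrow$(vii) we split the moments dyadically. Since $\mu_n\ge (1-2^{-j})^{n}m_j$, for $n\asymp 2^j$ we get $\mu_n\gtrsim m_j$, which gives $\sum_j d_j^r\lesssim\sum_n (n+1)^{r\gamma-2}\mu_n^r$. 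Note that $2^{j(r\gamma-1)}m_j^r=d_j^r$, because $r\sigma=r\gamma-1$. Conversely, we bound $\mu_n\le\sum_k (1-2^{-k-1})^n(m_k-m_{k+1})\lesssim \sum_k e^{-n2^{-k-1}}m_k$. This is a discrete Hardy-type inequality: the terms with $2^k\ll n$ decay exponentially and those with $2^k\gtrsim n$ are tails. We close it with the standard Hardy inequality in $\ell^r$, valid for all $0<r<\infty$ (for $r\le1$ one uses $(\sum x_k)^r\le\sum x_k^r$). The weight $2^{-k\sigma}$ with $\sigma$ possibly $\le0$ needs care; the exponential factor handles the terms $k<\log_2 n$. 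For $k\ge\log_2 n$ we use $m_k$, and if $\sigma\le 0$ we note that the condition forces $\mu_n\asymp m_{\log n}$-type behavior anyway.

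For (v)$\Leftrightarrow$(iv) we estimate $P_{\mu,\gamma}(e^{i\theta})\asymp\int (1-t+|\theta|)^{-\gamma}d\mu(t)\asymp \sum_{2^{-k}\ge|\theta|}2^{k\gamma}(m_k-m_{k+1})+|\theta|^{-\gamma}\mu([0,1-|\theta|))$. Integrating its $r$-th power over dyadic $|\theta|\asymp 2^{-j}$ gives weights $2^{-j}$, and the result again reduces to Hardy inequalities for $\sum 2^{j(\gamma r-1)}m_j^r$. For (vi)$\Leftrightarrow$(v) we use $|F_{\mu,\gamma}(\rho e^{i\theta})|\le P_{\mu,\gamma}$-type bounds via $|1-t\rho e^{i\theta}|\gtrsim|1-te^{i\theta}|$ to get (v)$\Rightarrow$(vi). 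For the converse, since $\mu\ge0$ the real part of $(1-tz)^{-\gamma}$ is comparable to its modulus when $|\arg(1-tz)|$ is small ($\gamma<1$ direct; in general, restrict to the sector $|\theta|\lesssim 1-t$ and use a lower bound $|F|\ge \mathrm{Re}\,$ of a suitable rotation). Alternatively, (vi)$\Leftrightarrow$(vii) follows from the known $H^r$ characterization of power series with positive decreasing-type coefficients (Hardy--Littlewood: for nonnegative coefficients $b_n$ with appropriate monotonicity, $\sum b_nz^n\in H^r$ iff $\sum n^{r-2}b_n^r<\infty$), applied to $b_n=A_n^\gamma\mu_n\asymp n^{\gamma-1}\mu_n$. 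We expect to use this route, checking the quasi-monotonicity of $n^{\gamma-1}\mu_n$ via a block-averaging argument.

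\emph{Operator equivalences.} For necessity, (i)$\Rightarrow$(vi), we test on rotations and use a duality/averaging argument. We average $\|\mathcal C_{\mu,\gamma}f_\zeta\|_q$ over the family $f=\sum \pm$ or over rotated kernels. More concretely, since $(1/q)=(1/p)+(1/r)$, we choose $f=F_{\mu,\gamma}^{\,r/p}$-type test functions: take $g\in H^p$ with $g$ outer, $|g|^p=|F|^{r}$ on $\T$. We then aim to show $\|\mathcal C_{\mu,\gamma}g\|_q\gtrsim\|F_{\mu,\gamma}\|_r^{r/q}$. The hardest point is the lower bound of $|\mathcal C_{\mu,\gamma} g|$ by $|g|\,P_{\mu,\gamma}$, which exploits positivity of $\mu$ and the fact that $g(tz)\approx g(z)$ for $t$ near $1$ is false in general. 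Instead we will use $p=\infty$ reduction. First, $\|\mathcal C_{\mu,\gamma}(1)\|_q\le\|\mathcal C\|$ handles $p=\infty$ with $r=q$. For $p<\infty$ we use Khintchine/Kahane averaging with random-sign lacunary-block functions, or a factorization $H^q=H^p\cdot H^r$ combined with the Andersen boundedness of $C^\gamma$ to transfer test functions. This necessity step is the main obstacle.

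For sufficiency, (iii)$\Rightarrow$(ii), we decompose $\mu=\sum_j\mu|_{[a_j,a_{j+1})}$. For each piece, $|\mathcal C_{\mu_j,\gamma}f(z)|\le m'_j\,\sup_{t\in I_j}|f(tz)||1-tz|^{-\gamma}$, which by subharmonicity is controlled by the radial maximal function $f^*$ times $|1-a_jz|^{-\gamma}$ at scale $2^{-j}$. Hence $|\mathcal C_{\mu,\gamma}f|\lesssim f^*\cdot P$ pointwise on $\T$, modulo an averaging over a nontangential region. Applying H\"older with $1/q=1/p+1/r$ and the maximal theorem gives $\|\mathcal C_{\mu,\gamma}f\|_q\lesssim\|f\|_p\|P_{\mu,\gamma}\|_r$. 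Compactness then follows because truncations $\mu|_{[0,a_N)}$ give compact operators, and the tails have $\|P\|_r\to0$ by (iii).
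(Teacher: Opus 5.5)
\textbf{Verdict: genuine gap in the necessity direction.} Your sufficiency half matches the paper. The pointwise bound $|\mathcal C_{\mu,\gamma}f(\rho e^{i\theta})|\le 2^\gamma f^*(e^{i\theta})P_{\mu,\gamma}(e^{i\theta})$ follows directly from $|f(\rho te^{i\theta})|\le f^*(e^{i\theta})$ and $|1-te^{i\theta}|\le 2|1-\rho te^{i\theta}|$, so no subharmonicity or nontangential averaging is needed. H\"older, the maximal theorem and truncation of $\mu$ then give (iii)$\Rightarrow$(ii). The measure equivalences also follow the paper's lines, with one loose end in the moment comparison. For $\sigma<0$ both sides are trivially $\asymp\mu_0^r$. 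For $\sigma\ge0$ you must keep the differences $m_k-m_{k+1}$ in the range $2^k\gtrsim n$ and telescope them. Bounding them by $m_k$ leaves the kernel $2^{-(k-j)\sigma}$, which is not summable at $\sigma=0$.

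\textbf{The gap: (i)$\Rightarrow$(iii) for $p<\infty$.} Your $p=\infty$ case, testing with $1$, is right. For finite $p$ you list three strategies but carry out none of them, so no lower bound for the operator norm is proved. The outer function with $|g|^p=|F_{\mu,\gamma}|^r$ lacks the needed lower bound on $|\mathcal C_{\mu,\gamma}g|$, as you note yourself; Khintchine averaging and the factorization through $C^\gamma$ are not worked out.

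\textbf{The missing idea: positivity on the radius.} Take $c_j\ge0$ and $f=\sum_j c_jk_j$ with $k_j(z)=(1-a_j^2)^{1/p}(1-a_jz)^{-2/p}$. Then $f$ is positive on $[0,1)$, so
\[
\mathcal C_{\mu,\gamma}f(a_j)\ge c_j\int_{[a_j,1)}\frac{k_j(ta_j)}{(1-ta_j)^{\gamma}}\,d\mu(t)\gtrsim c_j\,\frac{m_j}{(1-a_j)^{\gamma+1/p}}.
\]
Three further facts finish the argument:
\begin{itemize}
\item a dyadic arc estimate gives $\|f\|_{H^p}\lesssim\|c\|_{\ell^p}$;
\item the Carleson embedding for $\sum_j(1-a_j)\delta_{a_j}$ gives $\sum_j(1-a_j)|g(a_j)|^q\lesssim\|g\|_{H^q}^q$;
\item together these yield $\|\{d_jc_j\}\|_{\ell^q}\lesssim\|\mathcal C_{\mu,\gamma}\|\,\|c\|_{\ell^p}$, and boundedness of the diagonal map $\ell^p\to\ell^q$ forces $d\in\ell^r$ (test $c_j=d_j^{r/p}$ on finite sets), with the matching norm bound.
\end{itemize}

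\textbf{The same device also fixes (vi)$\Rightarrow$(iii).} Use $F_{\mu,\gamma}(a_j)\ge 2^{-\gamma}2^{j\gamma}m_j$ together with the same sampling inequality. Your proposal leaves this step unsettled for two reasons. The sector/real-part argument is not carried out. The Hardy--Littlewood coefficient theorem, in its classical form, covers monotone coefficients and $1<r<\infty$, whereas $A_n^\gamma\mu_n$ need not be monotone when $\gamma>1$ and $r$ may be $\le1$.
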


At $p=\infty$ we have $r=q$, and \eqref{eq:tail-integral-main} is precisely
\begin{equation}\label{eq:Hinfty-tail}
 \int_0^1\frac{\mu([t,1))^q}{(1-t)^{q\gamma}}\,dt<\infty.
\end{equation}
Thus the theorem includes the endpoint $H^\infty\to H^q$ for every
$1\le q<\infty$, with $F_{\mu,\gamma}\in H^q$ and the moment condition
$\sum_{n\ge0}(n+1)^{q\gamma-2}\mu_n^q<\infty$.

For the classical kernel $\gamma=1$ and $1\le q<p<\infty$,
Theorem~\ref{thm:lower-main} sharpens the sufficient condition in
\cite[Theorem~3.15(ii)]{Blasco2024}. That result implies boundedness under
\begin{equation}\label{eq:blasco-sufficient}
 \sum_{n=0}^\infty (n+1)^{-1/r}\mu_n<\infty,
\end{equation}
which, for positive measures, is equivalent to
\begin{equation}\label{eq:strong-integral}
 \int_0^1\frac{d\mu(t)}{(1-t)^{1-1/r}}<\infty.
\end{equation}
The exact condition  is strictly weaker; see Proposition \ref{prop:strictness} below.

Our third theorem treats the  endpoint $H^p\to H^\infty$
for $1<p<\infty$. In contrast to the range in
Theorem~\ref{thm:upper-improved}, the normalized tail must satisfy a
summability condition. The associated generating function is
$F_{\mu,\gamma+1}$, rather than $F_{\mu,\gamma}$. In both the lower triangle and this 
endpoint, boundedness also implies compactness. 

\begin{theorem}\label{thm:target-infty}
Let $1<p<\infty$, $\gamma>0$, and let $\mu$ be a finite positive
Borel measure on $[0,1)$. Put
\[
 s=\gamma+\frac1p,\qquad
 a_j=1-2^{-j},\qquad b_j=2^{js}\mu([a_j,1)),\quad j\ge0.
\]
Then the following assertions are equivalent:
\begin{enumerate}[label=\textup{(\roman*)}]
 \item $\mathcal C_{\mu,\gamma}:H^p\to H^\infty$ is bounded;
 \item $\mathcal C_{\mu,\gamma}:H^p\to H^\infty$ is compact;
 \item $b=\{b_j\}_{j\ge0}\in\ell^{p'}$;
 \item
 \begin{equation}\label{eq:target-tail}
  \int_0^1\left(\frac{\mu([t,1))}{(1-t)^{\gamma+1}}\right)^{p'}dt
  <\infty;
 \end{equation}
 \item $F_{\mu,\gamma+1}\in H^{p'}$;
 \item
 \begin{equation*}\label{eq:target-moment}
  \sum_{n=0}^\infty(n+1)^{p'(\gamma+1)-2}\mu_n^{p'}<\infty.
 \end{equation*}
\end{enumerate}
Moreover,
\begin{align}\label{eq:target-norm}
 \|\mathcal C_{\mu,\gamma}\|_{H^p\to H^\infty}
 &\asymp \|F_{\mu,\gamma+1}\|_{H^{p'}}
 \asymp \|P_{\mu,\gamma+1}\|_{L^{p'}(\T)}\notag\\
 &\asymp \|b\|_{\ell^{p'}}
 \asymp\left(\sum_{n=0}^\infty
      (n+1)^{p'(\gamma+1)-2}\mu_n^{p'}\right)^{1/p'}\notag\\
 &\asymp\left[\mu([0,1))^{p'}+
   \int_0^1\left(\frac{\mu([t,1))}{(1-t)^{\gamma+1}}\right)^{p'}dt
   \right]^{1/p'}.
\end{align}
The comparison constants depend only on $p$ and $\gamma$.
\end{theorem}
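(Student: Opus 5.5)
The plan is to reduce the operator statements (i), (ii) to the measure-theoretic conditions (iii)--(vi). The equivalences among the latter are routine dyadic computations, so I do them first.

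\emph{Equivalence of (iii)--(vi) with the norm comparisons.} Splitting $[0,1)$ into the intervals $[a_j,a_{j+1})$ and using the monotonicity of $t\mapsto\mu([t,1))$, the integral in \eqref{eq:target-tail} is comparable to $\sum_j 2^{-j}\bigl(2^{j(\gamma+1)}m_j\bigr)^{p'}$. Since $(\gamma+1)p'-1=p'(\gamma+1/p)$, this equals $\sum_j b_j^{p'}$; the term $\mu([0,1))^{p'}$ accounts for $j=0$. Formally, (iv)--(vi) are the conclusions (iv)--(vii) of Theorem~\ref{thm:lower-main} with $\gamma$ replaced by $\gamma+1$ and $r=p'$, because then $\sigma=\gamma+1-1/p'=\gamma+1/p=s$. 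I would check that the proofs of those implications in Theorem~\ref{thm:lower-main} use only $\gamma>0$ and $0<r<\infty$, and not the operator. If so, they apply verbatim and give $\|F_{\mu,\gamma+1}\|_{H^{p'}}\asymp\|P_{\mu,\gamma+1}\|_{L^{p'}}\asymp\|b\|_{\ell^{p'}}\asymp$ the moment sum. Otherwise I would redo them directly, as follows.
\begin{itemize}
\item The lower bound $|F_{\mu,\gamma+1}(\rho e^{i\theta})|\gtrsim\int_{a_j}^{\rho}\cdots$ on arcs $|\theta|\approx2^{-j}$.
\item The upper bound $P_{\mu,\gamma+1}(e^{i\theta})\lesssim\sum_j m_j\min(2^{j(\gamma+1)},|\theta|^{-\gamma-1})$, followed by a discrete Hardy inequality.
\item The Hardy--Littlewood type coefficient estimate for the positive, essentially monotone coefficients $A^{\gamma+1}_n\mu_n$.
\end{itemize}

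\emph{Boundedness reduces to a duality.} For $f\in H^p$ and $z\in\D$,
\[
\mathcal C_{\mu,\gamma}f(z)=\int_0^1 \frac{f(tz)}{(1-tz)^\gamma}\,d\mu(t).
\]
The key identity is the Cauchy pairing: $\int_0^1 f(tz)(1-tz)^{-\gamma}\,d\mu(t)=\langle f,\overline{\,\cdot\,}\rangle$ against the kernel $g_z(w)=\int_0^1(1-t\bar z w)^{-1}(1-t\bar z\bar{\,}\ldots)$. To avoid that mess, I would use the sufficiency estimate directly:
\[
|\mathcal C_{\mu,\gamma}f(z)|\le\int_0^1\frac{|f(tz)|}{|1-tz|^\gamma}\,d\mu(t)\lesssim\int_0^1\frac{\|f\|_p}{(1-t)^{1/p}|1-tz|^{\gamma}}\,d\mu(t).
\]
This already needs $\int (1-t)^{-s}d\mu<\infty$, which is stronger than $b\in\ell^{p'}$, so it is not sharp. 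Instead I would write $f(tz)=\int_\T f(\zeta)(1-tz\bar\zeta)^{-1}\,dm(\zeta)$. Fubini then gives
\[
\mathcal C_{\mu,\gamma}f(z)=\int_\T f(\zeta)\,K_z(\bar\zeta)\,dm(\zeta),\qquad K_z(w)=\int_0^1\frac{d\mu(t)}{(1-tz)^\gamma(1-tzw)}.
\]
Hence $\|\mathcal C_{\mu,\gamma}\|_{H^p\to H^\infty}\asymp\sup_{z}\operatorname{dist}_{L^{p'}}(K_z,\overline{H^p_0})$. Because $K_z$ is analytic in $w$ and $1<p<\infty$, the Riesz projection is bounded, so this is $\asymp\sup_z\|K_z\|_{H^{p'}}$.

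\emph{The main obstacle.} It remains to show $\sup_{z}\|K_z\|_{H^{p'}}\asymp\|F_{\mu,\gamma+1}\|_{H^{p'}}$. I expect this to be the key step.
\begin{itemize}
\item \textbf{Lower bound.} Take $z=\rho\to1$ real. Then $K_\rho(w)=\int (1-t\rho)^{-\gamma}(1-t\rho w)^{-1}d\mu$, and $|1-t\rho|\le|1-t\rho w|$ up to a constant, so $|K_\rho(w)|\gtrsim\bigl|\int(1-t\rho w)^{-\gamma-1}d\mu\bigr|$ in an averaged sense. More cleanly, I would compare the positive quantities $P$: on $\T$, $\mathrm{Re}$ and modulus estimates give $\|K_\rho\|_{H^{p'}}\gtrsim\|P_{\mu_\rho,\gamma+1}\|_{L^{p'}}$ with $d\mu_\rho=\chi_{[0,\rho)}d\mu$. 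Letting $\rho\to1$ and using the already established equivalence with $\|b\|_{\ell^{p'}}$ gives necessity. Alternatively, I would test on $f_a(w)=(1-aw)^{-2/p}$-type functions at $z=a$ to get each dyadic block, and then the sharp $\ell^{p'}$ sum by duality against a combination $\sum c_j f_{a_j}$.
\item \textbf{Upper bound.} For $|z|<1$, write $z=|z|e^{i\phi}$. The dyadic bound $|K_z(w)|\lesssim\sum_j m_j\,2^{j\gamma}\min(2^j,|1-\ldots|^{-1})$, reduced by rotation to $K_{|z|}$, yields $\|K_z\|_{H^{p'}}\lesssim\|b\|_{\ell^{p'}}$ via the same Hardy inequality as in the first step.
\end{itemize}

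\emph{Compactness.} Compactness then follows because the tails $\mu|_{[a_N,1)}$ have $\|b^{(N)}\|_{\ell^{p'}}\to0$. So $\mathcal C_{\mu,\gamma}$ is a norm limit of the operators $\mathcal C_{\mu|_{[0,a_N)},\gamma}$, which are compact since they map bounded sets to functions analytic across $\overline{\D}$ with uniformly bounded derivatives.
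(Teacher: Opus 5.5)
Your proposal is correct in outline. It handles the measure equivalences (iii)--(vi) exactly as the paper does, via Lemmas~\ref{lem:potential}, \ref{lem:generating-tail} and \ref{lem:moment-tail} with $(r,\gamma)\to(p',\gamma+1)$. The operator part, however, follows a genuinely different route.

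\textbf{How the paper argues.} Sufficiency comes from the factorization $\mathcal C_{\mu,\gamma}f=F_{\mu,\gamma+1}\star C^\gamma f$, Andersen's theorem, and the estimate $\|h\star G\|_{H^\infty}\le\|h\|_{H^p}\|G\|_{H^{p'}}$. Compactness comes from approximating $F_{\mu,\gamma+1}$ by polynomials in $H^{p'}$. Necessity comes from testing on $f=\sum_jc_jk_j$ with $c_j\ge0$, applying Fatou's lemma as $x\to1^-$, and then Lemma~\ref{lem:diagonal} with target $\ell^1$.

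\textbf{How you argue.} You write $\mathcal C_{\mu,\gamma}f(z)=\int_\T f(\zeta)K_z(\bar\zeta)\,dm(\zeta)$. The M.~Riesz theorem then gives $\|\mathcal C_{\mu,\gamma}\|_{H^p\to H^\infty}\asymp_p\sup_z\|K_z\|_{H^{p'}}$; note the annihilator is $\overline{H^{p'}_0}$, not $\overline{H^p_0}$.

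Your upper bound does go through. Put $I_j=[a_j,a_{j+1})$ and take $|\theta|\asymp2^{-k}$. The rotated positive majorant $\int(1-t)^{-\gamma}|1-te^{i\theta}|^{-1}\,d\mu(t)$ is then $\lesssim\sum_{j\le k}2^{j(\gamma+1)}\mu(I_j)+2^k\sum_{j>k}2^{j\gamma}\mu(I_j)$. After multiplying by $2^{-k/p'}$, the second sum becomes $\sum_{j>k}2^{-(j-k)/p}\,2^{js}\mu(I_j)$. This is not the same inequality as the one for $P_{\mu,\gamma+1}$, since your majorant is pointwise larger. It closes only because $p<\infty$ makes that kernel summable. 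Truncating the measure then gives compactness, as in the paper's lower-triangle proof.

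\textbf{What each route buys.} Yours is self-contained: it needs no Andersen theorem, and it identifies the norm with the intrinsic quantity $\sup_z\|K_z\|_{H^{p'}}$. The paper's is shorter and explains the shift $\gamma\mapsto\gamma+1$ through $R^\gamma F_\mu=F_{\mu,\gamma+1}$. Its necessity argument also uses only positivity, with no Riesz projection.

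\textbf{One step needs more than you wrote.} The identity $\mathrm{Re}\,(1-xe^{i\theta})^{-1}=\frac12\bigl(1+(1-x^2)|1-xe^{i\theta}|^{-2}\bigr)$ gives $\mathrm{Re}\,K_\rho(e^{i\theta})\ge\frac12\int(1-t\rho)^{1-\gamma}|1-t\rho e^{i\theta}|^{-2}\,d\mu(t)$. This dominates a multiple of $P_{\mu_\rho,\gamma+1}$ only when $\gamma\ge1$. For $0<\gamma<1$ the ratio of the kernels behaves like $\bigl((1-t)/|\theta|\bigr)^{1-\gamma}$ when $|\theta|\gg1-t$, so the pointwise comparison fails.

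The fix is as follows. On $|\theta|\asymp2^{-k}$, keep only $t\in I_k$ and take $\rho\ge a_{k+1}$. This gives $\mathrm{Re}\,K_\rho\gtrsim2^{k(\gamma+1)}\mu(I_k)$, and hence $\sup_\rho\|K_\rho\|_{H^{p'}}\gtrsim\|\{2^{ks}\mu(I_k)\}\|_{\ell^{p'}}$. Then recover $b$ from $b_k=\sum_{j\ge k}2^{-(j-k)s}\,2^{js}\mu(I_j)$ and Young's inequality. Alternatively, your fallback with $\sum_jc_jk_j$ is exactly the paper's necessity argument and avoids this issue.
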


Since $p's+1=p'(\gamma+1)$, the integral in
\eqref{eq:target-tail} is exactly
\[
 \int_0^1\left(\frac{\mu([t,1))}{(1-t)^s}\right)^{p'}
 \frac{dt}{1-t}.
\]
Thus the $s$-Carleson condition alone does not suffice at this
endpoint; Example~\ref{ex:target-carleson-fails} makes the distinction
explicit. The proof of Theorem~\ref{thm:target-infty} will reuse the
measure comparisons already established for Theorem~\ref{thm:lower-main}.

For $\gamma=1$, the off-diagonal criteria combine with the known
diagonal and $H^\infty\to H^\infty$ results to give the full boundedness
classification for $1\le p,q\le\infty$.

\begin{corollary}\label{thm:complete-classical}
Let $0< p,q\le\infty$
and let $\mu$ be a finite positive Borel measure on $[0,1)$. Then the following statements hold.
\begin{enumerate}[label=\textup{(\alph*)}]
 \item if $0< q<\infty$, then
 $\mathcal C_\mu:H^p\to H^q$ is bounded if and only if $\mu$ is a
 $\left(1+1/p-1/q\right)$-Carleson measure;
 \item if $0< q<p\le\infty$ and $1/r=1/q-1/p$, then
 $\mathcal C_\mu:H^p\to H^q$ is bounded if and only if
 \[
  \sum_{n=0}^\infty(n+1)^{r-2}\mu_n^r<\infty;
 \]
 \item if $0<p\leq 1$ and $q=\infty$, then
 $\mathcal C_\mu:H^p\to H^\infty$ is bounded if and only if
 $\mu$ is a $(1+\frac{1}{p})$-Carleson measure;
 \item if $1<p\leq\infty$ and $q=\infty$, then
 $\mathcal C_\mu:H^p\to H^\infty$ is bounded if and only if
 \[
  \sum_{n=0}^\infty(n+1)^{2p'-2}\mu_n^{p'}<\infty.
 \]
\end{enumerate}
In cases \textup{(b)} and \textup{(d)}, boundedness is equivalent
to compactness.
\end{corollary}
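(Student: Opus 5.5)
The corollary is obtained by setting $\gamma=1$ in Theorems~\ref{thm:upper-improved}, \ref{thm:lower-main} and \ref{thm:target-infty}, and filling the two remaining gaps with known results: the diagonal range $1\le p=q<\infty$ and the endpoint $p=q=\infty$. I read case (a) as the range $0<p\le q<\infty$, since the range $q<p$ is handled by (b).

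\emph{Case (a).} For $p<q$, apply Theorem~\ref{thm:upper-improved}(a) with $\gamma=1$, which gives $s=1+1/p-1/q$. For $0<p=q<1$, apply Theorem~\ref{thm:upper-improved}(b). For $1\le p=q<\infty$, we have $s=1$, and \cite[Theorem 1]{GGM2022} states that $\mathcal C_\mu$ is bounded on $H^p$ if and only if $\mu$ is a Carleson measure.

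\emph{Case (b).} With $\gamma=1$, the exponent $r\gamma-2$ equals $r-2$. The claim is then the equivalence (i)$\Leftrightarrow$(vii) of Theorem~\ref{thm:lower-main}, which also supplies (i)$\Leftrightarrow$(ii), that is, boundedness is equivalent to compactness.

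\emph{Case (c).} This is Theorem~\ref{thm:upper-improved}(c) with $\gamma=1$, where $s=1+1/p$.

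\emph{Case (d), $1<p<\infty$.} Theorem~\ref{thm:target-infty} with $\gamma=1$ gives the exponent $p'(\gamma+1)-2=2p'-2$. Its equivalences (i)$\Leftrightarrow$(ii)$\Leftrightarrow$(vi) give both the criterion and the equivalence of boundedness and compactness.

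\emph{Case (d), $p=\infty$.} This is the only step needing an extra argument. Here $p'=1$, so the condition reads $\sum_n\mu_n<\infty$. By monotone convergence,
\[
\sum_{n\ge0}\mu_n=\int_{[0,1)}\sum_{n\ge0}t^n\,d\mu(t)=\int_{[0,1)}\frac{d\mu(t)}{1-t},
\]
so the condition coincides with the criterion of \cite{GGM2022} for boundedness of $\mathcal C_\mu$ on $H^\infty$.

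For compactness I would use a truncation argument. For $0<a<1$, let $\mu^a=\mu|_{[0,a]}$. Since $\mathcal C_\mu f(z)=\int f(tz)(1-tz)^{-1}\,d\mu(t)$, the integral representation gives the pointwise bound
\[
\|\mathcal C_\mu f-\mathcal C_{\mu^a}f\|_\infty\le\|f\|_\infty\int_{[a,1)}\frac{d\mu(t)}{1-t}.
\]
The right-hand side tends to $0$ as $a\to1$, so $\mathcal C_{\mu^a}\to\mathcal C_\mu$ in operator norm. Each $\mathcal C_{\mu^a}$ is compact on $H^\infty$. Indeed, for $\|f\|_\infty\le1$ the function $\mathcal C_{\mu^a}f$ is analytic on the larger disc $|z|<1/a$ and uniformly bounded on $|z|\le (1+a)/(2a)$. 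By Montel's theorem, any bounded sequence has a subsequence converging uniformly on $\overline{\D}$, hence in $H^\infty$. A norm limit of compact operators is compact, which finishes case (d) at $p=\infty$.

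The main obstacle, such as it is, is this $p=\infty$ endpoint of (d), because it is the one place where the theorems of the paper do not apply directly. Everything else is bookkeeping of exponents.
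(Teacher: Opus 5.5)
Your proposal is correct and follows the paper's own route. You specialize Theorems~\ref{thm:upper-improved}, \ref{thm:lower-main} and \ref{thm:target-infty} to $\gamma=1$, reading (a) as $0<p\le q<\infty$, which is the intended meaning. You then take the diagonal range $1\le p=q<\infty$ and the $H^\infty\to H^\infty$ boundedness criterion from \cite{GGM2022}. Your only addition is the truncation-plus-Montel argument giving compactness at $p=q=\infty$. The paper leaves that step implicit, and your argument is valid; it mirrors the truncation step in the proof of Theorem~\ref{thm:lower-main}.
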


The proof of Theorem~\ref{thm:upper-improved} uses normalized kernel tests for necessity.
Sufficiency follows from tail comparison and Hardy--Littlewood
mixed-mean estimates when $0<p\le1$, and from coefficient factorization,
Marcinkiewicz multipliers, and fractional integration when $p>1$.
For Theorem~\ref{thm:lower-main}, boundary-potential estimates and measure truncation
give boundedness and compactness, while dyadic kernel sums and radial
sampling reduce necessity to a diagonal multiplier criterion.
At $p=\infty$, testing with the constant function $1$ suffices
for necessity. Discrete convolution estimates identify the
equivalent tail and moment conditions throughout the lower triangle. Theorem~\ref{thm:target-infty} reuses these comparisons with kernel
order $\gamma+1$: Hadamard factorization and polynomial approximation
give boundedness and compactness, whereas positive kernel tests and
Fatou's lemma yield the necessary $\ell^{p'}$ condition.

The paper is organized as follows. Section~\ref{sec:preliminaries}
collects the preliminary results. Section~\ref{sec:upper} proves
Theorem~\ref{thm:upper-improved}, and Section~\ref{sec:lower} proves
Theorem~\ref{thm:lower-main}, including the source endpoint $p=\infty$.
Section~\ref{sec:target-infty} proves Theorem~\ref{thm:target-infty}
for the target $H^\infty$, reusing the measure comparisons from
Section~\ref{sec:lower}. Section~\ref{sec:classical} discusses the strictness of the previously known sufficient
condition and threshold examples, and Section~\ref{sec:conclusion}
contains concluding remarks.

\section{Preliminaries}\label{sec:preliminaries}

For $\beta>0$,  the Riemann--Liouville type fractional derivative is  defined by
\[
 R^\beta f(z)
 =\sum_{n=0}^\infty
 \frac{\Gamma(n+1+\beta)}{\Gamma(n+1)\Gamma(\beta+1)}a_nz^n,
 \qquad f(z)=\sum_{n=0}^\infty a_nz^n\in\Hol(\D).
\]
In particular, the coefficient of $z^n$ in $R^\gamma F_\mu$ is
$A_n^{\gamma+1}\mu_n$. This normalization is used in the factorization
in Subsection~\ref{subsec:upper-Banach}.

Let $X $ and $Y $ be two spaces of analytic functions on the unit disc $\D$. Let  $f(z)=\sum_{n=0}^\infty a_{n}z^{n}\in X$  and $\lambda:=\{\lambda_{n}\}_{n=0}^{\infty}$ be a sequence.
We can define the  multiplier operator  $T_{\lambda}$ as follows,
\begin{equation*}(T_{\lambda}f)(z)=\sum_{n=0}^\infty \lambda_{n}a_{n}z^{n}.\end{equation*}
If $T_{\lambda}:X\rightarrow Y$, then $\lambda$
is said to be a coefficient multiplier or simply multiplier
from $X $ into $Y$.   The   multipliers are closely related to the Hadamard product.

Recall that for analytic functions $u(z)=\sum_{n\ge0}u_nz^n$ and
$v(z)=\sum_{n\ge0}v_nz^n$, their Hadamard product is
\[
 (u\star v)(z)=\sum_{n\ge0}u_nv_nz^n.
\]

We shall use two classical multiplier facts. The following result is the classical Hardy--Littlewood fractional integration theorem for analytic Hardy spaces; see \cite{HardyLittlewood1932,Duren1970}.

\begin{lemma}\label{lem:HL-fractional}
Let $1<p<q<\infty$ and put
\[
 \delta=\frac1p-\frac1q>0.
\]
Then the fractional integration multiplier
\[
 J_\delta f(z)=\sum_{n=0}^\infty (n+1)^{-\delta}a_nz^n,
 \qquad f(z)=\sum_{n=0}^\infty a_nz^n,
\]
defines a bounded operator $J_\delta:H^p\to H^q$.
\end{lemma}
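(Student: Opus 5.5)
This is the classical Hardy--Littlewood theorem, so the plan is to reduce it to the Hardy--Littlewood--Sobolev inequality for the fractional derivative and integral on the circle. Since $\delta\le 1/p<1$, we have $0<\delta<1$. Fix $f\in H^p$ with boundary function $f\in L^p(\T)$. Because $p>1$, the Riesz projection is bounded, so the analytic function $f$ is the Cauchy--Szeg\H{o} projection of its boundary values and no information is lost by working on $\T$.

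\textbf{Replacing the multiplier.} We swap $(n+1)^{-\delta}$ for a comparable multiplier given by convolution with a positive kernel. The natural choice is
\[
 k_\delta(\theta)=\Bigl|1-e^{i\theta}\Bigr|^{\delta-1}\asymp|\theta|^{\delta-1}.
\]
Its Fourier coefficients satisfy $\hat k_\delta(n)\asymp (|n|+1)^{-\delta}$. One can see this directly: the coefficients of $(1-z)^{-\delta}$ are $A_n^\delta\asymp(n+1)^{\delta-1}$, or one can use the classical asymptotics of the Riesz potential. Equivalently, on analytic functions,
\[
 \sum_{n\ge0}A_n^{\delta}\,a_nz^n
\]
is, up to the factor $(n+1)^{\delta-1}$ versus the target exponent, the Hadamard product with $(1-z)^{-\delta}$. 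Cleaner still, write $(n+1)^{-\delta}=\lambda_n\,c_n$, where $c_n$ are the Fourier coefficients of $k_\delta$ restricted to $n\ge0$. Then $\lambda_n=(n+1)^{-\delta}/c_n$ is a smooth sequence of order $0$, with bounded variation on dyadic blocks. By the Marcinkiewicz multiplier theorem, $T_\lambda$ is bounded on $H^q$ for $1<q<\infty$. So it suffices to show that $f\mapsto k_\delta * f$ maps $L^p(\T)\to L^q(\T)$.

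\textbf{The HLS step.} Since $k_\delta\ge0$ and $k_\delta(\theta)\lesssim|\theta|^{\delta-1}$ with $|\theta|\le\pi$, the inequality $\|k_\delta*g\|_{L^q}\lesssim\|g\|_{L^p}$ for $1/q=1/p-\delta$ is the one-dimensional Hardy--Littlewood--Sobolev inequality, transferred to the circle by periodization. I would prove it by Hedberg's argument. Split the integral at radius $\rho$:
\[
 |k_\delta*g|(\theta)\lesssim \rho^{\delta}Mg(\theta)+\rho^{\delta-1/p}\|g\|_p .
\]
The first term uses the Hardy--Littlewood maximal function $Mg$. The second term follows from H\"older's inequality, since $|\theta|^{\delta-1}\in L^{p'}$ away from $0$ exactly when $\delta<1/p$. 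Optimizing in $\rho$ gives
\[
 |k_\delta*g|\lesssim (Mg)^{p/q}\|g\|_p^{1-p/q},
\]
and the maximal theorem on $L^p$ finishes the estimate. For $q<\infty$ we need $\delta<1/p$, which holds.

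\textbf{Main obstacle.} The delicate step is the comparison of multipliers: verifying that $\hat k_\delta(n)$ is positive and asymptotic to $c(n+1)^{-\delta}$ with derivative-type control, so that the ratio is a Marcinkiewicz multiplier. If this is awkward, I would instead avoid kernels altogether and work with analytic means. Write
\[
 J_\delta f(z)=\frac{1}{\Gamma(\delta)}\int_0^1 f(tz)\,\bigl(\log\tfrac1t\bigr)^{\delta-1}\,dt\cdot(\text{normalization}),
\]
which holds exactly because $\int_0^1 t^{n}(\log 1/t)^{\delta-1}dt=\Gamma(\delta)(n+1)^{-\delta}$. Then use the Hardy--Littlewood estimate $M_\infty(t,f)\lesssim (1-t)^{-1/p}\|f\|_p$ together with the interpolation $M_q(t,f)\le M_\infty^{1-p/q}M_p^{p/q}$, and control $M_q(\rho,J_\delta f)$ by Minkowski's inequality. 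This gives the weak-type endpoint, and Marcinkiewicz interpolation across pairs $(p,q)$ yields the strong bound. This integral representation is exact, which is why I would try it first.
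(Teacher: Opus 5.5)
\textbf{Main route.} The paper does not prove this lemma; it only quotes the classical Hardy--Littlewood theorem from Hardy--Littlewood (1932) and Duren's book. Your main route is a correct and genuinely independent real-variable proof. You replace $(n+1)^{-\delta}$ by a positive kernel $\lesssim|\theta|^{\delta-1}$. You get $L^p(\T)\to L^q(\T)$ from Hedberg's bound $|k_\delta*g|\lesssim (Mg)^{p/q}\|g\|_p^{1-p/q}$ and the maximal theorem. You then absorb the discrepancy with a Marcinkiewicz multiplier on $H^q$. These tools need $p>1$ and $1<q<\infty$, which is exactly the lemma's range.

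The step you call the main obstacle disappears if you use the analytic kernel $G(z)=(1-z)^{\delta-1}$ instead of $k_\delta$. Your aside with $(1-z)^{-\delta}$ has the wrong exponent, since its coefficients are $\asymp(n+1)^{\delta-1}$. The coefficients of $G$ are $A_n^{1-\delta}=\Gamma(n+1-\delta)/(\Gamma(1-\delta)\Gamma(n+1))\asymp(n+1)^{-\delta}$. Moreover $G\in H^1$ and $|G(e^{i\theta})|=k_\delta(\theta)$. Hence the boundary function of $G\star f$ is $G*f$, and $|G*f|\le k_\delta*|f|$ on $\T$. So you never need the Fourier coefficients of $k_\delta$. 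Finally, $\lambda_n=(n+1)^{-\delta}/A_n^{1-\delta}$ is bounded with $\lambda_{n+1}/\lambda_n=\bigl(1+\frac1{n+1}\bigr)^{-\delta}\frac{n+1}{n+1-\delta}=1+O(n^{-2})$. Thus Lemma~\ref{lem:marcinkiewicz} applies to $J_\delta f=T_\lambda(G\star f)$.

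\textbf{Fallback route.} The fallback you say you would try first does not work as described. The identity $J_\delta f(z)=\Gamma(\delta)^{-1}\int_0^1 f(tz)(\log\frac1t)^{\delta-1}\,dt$ is exact, with no extra normalization. However, Minkowski's inequality with $M_q(t,f)\lesssim(1-t)^{-\delta}\|f\|_{H^p}$ leads to
\[
 \int_0^1(1-\rho t)^{-\delta}\bigl(\log\tfrac1t\bigr)^{\delta-1}\,dt\asymp\log\frac1{1-\rho},
\]
because $\delta=\frac1p-\frac1q$ is exactly the critical order. This gives no uniform bound, weak or strong. Interpolation cannot rescue it: the only estimates the method yields are $H^{p_0}\to H^{q_0}$ with $\frac1{p_0}-\frac1{q_0}<\delta$, and interpolating such pairs never reaches the critical line.

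The fix is to use the representation pointwise, exactly as in your Hedberg step. Fix $0<h\le1$ and use $(\log\frac1t)^{\delta-1}\le(1-t)^{\delta-1}$. For $t\ge1-h$, bound $|f(t\rho e^{i\theta})|\le f^*(e^{i\theta})$. For $t<1-h$, bound $|f(t\rho e^{i\theta})|\le(1-t)^{-1/p}\|f\|_{H^p}$. This gives
\[
 \Gamma(\delta)\,|J_\delta f(\rho e^{i\theta})|
 \le\frac{h^\delta}{\delta}\,f^*(e^{i\theta})
 +q\bigl(h^{-1/q}-1\bigr)\|f\|_{H^p}.
\]
Choosing $h=\min\{1,(\|f\|_{H^p}/f^*(e^{i\theta}))^p\}$ yields $|J_\delta f(\rho e^{i\theta})|\lesssim f^*(e^{i\theta})^{p/q}\|f\|_{H^p}^{1-p/q}$. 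The radial maximal theorem then gives $\|J_\delta f\|_{H^q}\lesssim\|f\|_{H^p}$. This repaired argument needs neither kernel asymptotics nor multiplier theorems. It also holds for all $0<p<q<\infty$, so it recovers the full classical theorem the paper cites, whereas your main route is confined to $1<p<q<\infty$.
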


 The classical Marcinkiewicz multiplier theorem therefore gives a bounded Fourier multiplier on $L^p(\T)$.  Since the multiplier preserves nonnegative Fourier frequencies, its restriction to the analytic subspace is bounded on $H^p$; see, for example, \cite[Theorems~6.2.2 and~4.3.7]{Grafakos2014}.

\begin{lemma}\label{lem:marcinkiewicz}
Let $1<p<\infty$ and let $b=\{b_n\}_{n\ge0}$ satisfy
\begin{equation}\label{eq:marcinkiewicz-condition}
 \sup_{N\ge1}
 \left(
 |b_{2N-1}|+
 \sum_{n=N}^{2N-2}|b_{n+1}-b_n|
 \right)<\infty.
\end{equation}
Then $T_b$ is bounded on $H^p$, and its operator norm is controlled by the quantity in \eqref{eq:marcinkiewicz-condition} together with $|b_0|$.
\end{lemma}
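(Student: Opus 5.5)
The plan is to deduce the statement from the classical Marcinkiewicz multiplier theorem on the circle, \cite[Theorem~6.2.2]{Grafakos2014}. That theorem says the following: if $m:\mathbb Z\to\C$ is bounded and has uniformly bounded variation on each dyadic block $\pm[2^k,2^{k+1})$, $k\ge0$, then the Fourier multiplier $M_m$ is bounded on $L^p(\T)$ for $1<p<\infty$. Its norm is controlled by $\sup_n|m(n)|$ plus the supremum of the block variations.

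I will use the multiplier $m(n)=b_n$ for $n\ge0$ and $m(n)=0$ for $n<0$. On the negative dyadic blocks $m$ vanishes identically, so their variation is zero. The point $n=0$ forms its own block and contributes only $|b_0|$.

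The first step is to check the hypotheses from \eqref{eq:marcinkiewicz-condition}; denote the supremum there by $K$. For a positive block take $N=2^k$, so the block is $\{N,\dots,2N-1\}$. Its variation is exactly $\sum_{n=N}^{2N-2}|b_{n+1}-b_n|\le K$. For boundedness, any $n$ in that block satisfies
\[
|b_n|\le |b_{2N-1}|+\sum_{j=n}^{2N-2}|b_{j+1}-b_j|\le K .
\]
Hence $\sup_n|m(n)|\le \max(|b_0|,K)$, and all block variations are bounded by $K$. The Marcinkiewicz theorem therefore gives $\|M_m g\|_{L^p(\T)}\lesssim_p(|b_0|+K)\|g\|_{L^p(\T)}$.

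The second step is to restrict to $H^p$. For an analytic polynomial $f$, the boundary function of $T_bf$ equals $M_mf$, because $f$ has no negative frequencies. So $\|T_bf\|_{H^p}=\|M_mf\|_{L^p}\lesssim(|b_0|+K)\|f\|_{H^p}$. Since $1<p<\infty$, analytic polynomials are dense in $H^p$, and $T_b$ extends to a bounded operator on $H^p$. This extension coincides with the coefficientwise definition of $T_b$, since for each $n$ the Taylor coefficient functional is continuous on $H^p$ and $T_b$ acts on it by multiplication by $b_n$.

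I expect no serious obstacle. The only delicate points are bookkeeping: matching the dyadic block convention of \cite{Grafakos2014} with the blocks $[N,2N-1]$ in \eqref{eq:marcinkiewicz-condition}, and remembering that $b_0$ lies outside every such block, which is why $|b_0|$ appears separately in the norm bound.
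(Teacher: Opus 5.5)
Your proposal is correct and follows the paper's route. You extend $b$ by zero to the negative frequencies, apply the Marcinkiewicz multiplier theorem on $L^p(\T)$, and restrict to the analytic subspace. Your check of the block hypotheses, the separate handling of $b_0$, and the density argument supply details the paper leaves to the citation.

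One bookkeeping point: \cite[Theorem~6.2.2]{Grafakos2014} is the real-line version of Marcinkiewicz's theorem. That is why the paper pairs it with the transference theorem \cite[Theorem~4.3.7]{Grafakos2014}, applied to an interpolant of $m$ that is continuous at the integers and has dyadic variations $\lesssim|b_0|+K$. Alternatively, you can cite Marcinkiewicz's original version for Fourier series directly.
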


The following  characterization is well known. The proof is simple and will be omitted.
\begin{lemma}\label{lem:moment-decay}
Let $s>0$ and let $\mu$ be a finite positive Borel measure on $[0,1)$.
Then $\mu$ is an $s$-Carleson measure if and only if
\[
 \mathfrak M_s(\mu):=\sup_{n\ge0}(n+1)^s\mu_n<\infty.
\]
Moreover,
\[
 \mathfrak C_s(\mu):=
 \sup_{0\le t<1}\frac{\mu([t,1))}{(1-t)^s}
 \asymp\mathfrak M_s(\mu).
\]
\end{lemma}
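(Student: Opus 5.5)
The statement to prove is Lemma~\ref{lem:moment-decay}: $\mathfrak C_s(\mu)\asymp\mathfrak M_s(\mu)$. I will prove the two inequalities separately. Both follow from comparing the moment $\mu_n=\int_{[0,1)}t^n\,d\mu(t)$ with the tail $\mu([t,1))$ at the matched scale $1-t\approx 1/(n+1)$.

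\emph{Tail bound from moments.} Fix $t\in[0,1)$. If $t<1/2$, then $\mu([t,1))\le\mu_0\le\mathfrak M_s(\mu)\le 2^s\mathfrak M_s(\mu)(1-t)^s$. If $t\ge1/2$, choose $n\ge1$ with $1/(n+1)\le 1-t<1/n$. For $u\in[t,1)$ we have $u^n\ge t^n\ge(1-1/n)^n\ge 1/4$. Hence
\[
\mu([t,1))\le 4\int_{[t,1)}u^n\,d\mu(u)\le 4\mu_n\le 4\mathfrak M_s(\mu)(n+1)^{-s}\le 4\mathfrak M_s(\mu)(1-t)^s .
\]
This gives $\mathfrak C_s(\mu)\lesssim\mathfrak M_s(\mu)$.

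\emph{Moment bound from tails.} Since $\mu([t,1))\le \mathfrak C_s(\mu)(1-t)^s$, Fubini (the layer-cake formula) gives
\[
\mu_n=\int_0^1 nt^{n-1}\mu([t,1))\,dt\le \mathfrak C_s(\mu)\,n\int_0^1 t^{n-1}(1-t)^s\,dt .
\]
The last integral equals $n\,B(n,s+1)=\Gamma(n+1)\Gamma(s+1)/\Gamma(n+s+1)\asymp (n+1)^{-s}$ by Stirling's formula, with constants depending only on $s$. This holds for $n\ge1$. For $n=0$ we use $\mu_0=\mu([0,1))\le\mathfrak C_s(\mu)$ directly. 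Therefore $\mathfrak M_s(\mu)\lesssim\mathfrak C_s(\mu)$.

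The equivalence of the two finiteness conditions follows at once, so $\mu$ is $s$-Carleson if and only if $\mathfrak M_s(\mu)<\infty$. The only real obstacle is justifying the Fubini identity and the Beta-function asymptotic; both are routine. The constants depend only on $s$.
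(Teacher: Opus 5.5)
Your proof is correct. It is the standard argument the paper has in mind when it omits the proof as well known: the same two ingredients reappear in the paper's proof of Lemma~\ref{lem:moment-tail}. These are the lower bound $u^n\ge 1/4$ on $[t,1)$ at the scale $1-t\asymp 1/n$, and the layer-cake identity $\mu_n=n\int_0^1t^{n-1}\mu([t,1))\,dt$.

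One trivial slip: $(1-1/n)^n\ge 1/4$ is false for $n=1$. In that case, however, $t\ge 1/2$ and $1-t\ge 1/2$ force $t=1/2$, so $t^n=1/2\ge 1/4$ still holds.
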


We record the discrete convolution inequality used in the potential,
kernel-synthesis, and moment estimates below.

\begin{lemma}\label{lem:discrete-young}
Define $(b*x)_m=\sum_{k\in\mathbb Z}b_{m-k}x_k$.
\begin{enumerate}[label=\textup{(\alph*)},leftmargin=*]
\item If $1\le u\le\infty$, $b\in\ell^1(\mathbb Z)$, and
$x\in\ell^u(\mathbb Z)$, then
\begin{equation*}\label{eq:discrete-young}
 \|b*x\|_{\ell^u}\le\|b\|_{\ell^1}\|x\|_{\ell^u}.
\end{equation*}
\item If $0<u<1$ and $b,x\in\ell^u(\mathbb Z)$, then
\begin{equation*}\label{eq:quasi-convolution}
 \|b*x\|_{\ell^u}^u
 \le\|b\|_{\ell^u}^u\|x\|_{\ell^u}^u.
\end{equation*}
\end{enumerate}
In both cases the defining series are absolutely convergent.
Sequences on $\mathbb N_0$ are extended by zero to negative indices.
\end{lemma}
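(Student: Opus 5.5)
Part (a) is the classical Young inequality. Write $(b*x)_m=\sum_k b_{m-k}x_k$. For $u=\infty$ the bound $|(b*x)_m|\le\sum_k|b_{m-k}|\,\|x\|_{\ell^\infty}=\|b\|_{\ell^1}\|x\|_{\ell^\infty}$ is immediate, and it also gives absolute convergence of each defining series. For $1\le u<\infty$ I would apply Minkowski's inequality in $\ell^u$ to the absolutely convergent series $b*x=\sum_j b_j\,(\tau_j x)$, where $(\tau_j x)_m=x_{m-j}$ is the shift. Shifts preserve the $\ell^u$ norm, so
\[
 \|b*x\|_{\ell^u}\le\sum_j|b_j|\,\|\tau_jx\|_{\ell^u}=\|b\|_{\ell^1}\|x\|_{\ell^u}.
\]
Absolute convergence of each entry $(b*x)_m$ follows from Hölder's inequality, since $\sum_k|b_{m-k}||x_k|\le\|b\|_{\ell^{u'}}\|x\|_{\ell^u}$. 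This is finite because $\ell^1\subset\ell^{u'}$.

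For part (b) with $0<u<1$, the key tool is the elementary inequality $\left(\sum_k|c_k|\right)^u\le\sum_k|c_k|^u$. It holds because $\ell^u\subset\ell^1$ with $\|c\|_{\ell^1}\le\|c\|_{\ell^u}$, which one sees by normalizing $\|c\|_{\ell^u}=1$: then each $|c_k|\le1$, so $|c_k|\le|c_k|^u$. Applying it for fixed $m$ with $c_k=b_{m-k}x_k$ gives
\[
 |(b*x)_m|^u\le\sum_k|b_{m-k}|^u|x_k|^u .
\]
In particular, the series defining $(b*x)_m$ converges absolutely, because its terms are dominated termwise by an $\ell^u$ sequence, hence by an $\ell^1$ sequence. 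Summing over $m$ and applying Tonelli's theorem to interchange the nonnegative sums yields
\[
 \sum_m|(b*x)_m|^u\le\sum_k|x_k|^u\sum_m|b_{m-k}|^u=\|b\|_{\ell^u}^u\|x\|_{\ell^u}^u,
\]
which is the claim.

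For sequences indexed by $\mathbb N_0$, extending by zero to negative indices reduces everything to the $\mathbb Z$ case. There is no real obstacle in this lemma. The only point needing care is to justify absolute convergence before rearranging sums, and the termwise domination above handles that in both parts.
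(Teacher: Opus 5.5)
Your proposal is correct and follows the paper's argument. For part (b) it is identical: subadditivity of $t\mapsto t^u$ applied termwise, then Tonelli. For part (a) the paper simply cites Young's inequality on $\mathbb Z$, and your Minkowski argument over shifts $\sum_j b_j\,\tau_j x$ is the standard proof of that cited result.
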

\begin{proof}
Part \textup{(a)} is Young's inequality on $\mathbb Z$ with counting
measure; see \cite[Theorem~1.2.12]{Grafakos2014}.
For \textup{(b)}, recall that
$(\sum_k y_k)^u\le\sum_k y_k^u$ for $y_k\ge0$. Hence
$\ell^u\subset\ell^1\cap\ell^\infty$, so every defining convolution
series is absolutely convergent. Subadditivity and Tonelli's theorem
then give
\[
 \sum_m|(b*x)_m|^u
 \le\sum_m\sum_k|b_{m-k}|^u|x_k|^u
 =\left(\sum_\ell|b_\ell|^u\right)\sum_k|x_k|^u.
\]
This proves the assertion.
\end{proof}

The next lemma identifies the exact boundary potential that occurs in the lower triangle.

\begin{lemma}\label{lem:potential}
Let $r>0$, $\gamma>0$, and put $\sigma=\gamma-1/r$. For
\[
 a_j=1-2^{-j},\qquad m_j=\mu([a_j,1)),\qquad d_j=2^{j\sigma}m_j,
\]
one has
\begin{equation}\label{eq:potential-dyadic-equivalence}
 \norm{P_{\mu,\gamma}}_{L^r(\T)}^r
 \asymp
 \sum_{j=0}^\infty d_j^r.
\end{equation}
Furthermore,
\begin{equation}\label{eq:integral-dyadic-equivalence}
 \sum_{j=0}^\infty d_j^r
 \asymp
 \mu([0,1))^r+
 \int_0^1
 \left(\frac{\mu([t,1))}{(1-t)^\sigma}\right)^r\frac{dt}{1-t}.
\end{equation}
Consequently,  the integral in \eqref{eq:integral-dyadic-equivalence} is finite if and only if $\{d_j\}\in\ell^r$.
\end{lemma}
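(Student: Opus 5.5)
The first step is the pointwise size of the potential in dyadic form. For $\theta\in[-\pi,\pi]$ we have $|1-te^{i\theta}|\asymp(1-t)+|\theta|$. Decompose $[0,1)$ into the dyadic shells $I_k=[a_k,a_{k+1})$, on which $1-t\asymp2^{-k}$. This gives
\[
 P_{\mu,\gamma}(e^{i\theta})\asymp\sum_{k\ge0}\frac{\mu(I_k)}{(2^{-k}+|\theta|)^{\gamma}}.
\]
Now fix a dyadic arc $E_j=\{2^{-j-1}<|\theta|\le2^{-j}\}$ and $\theta\in E_j$. For $k\ge j$ the term is comparable to $2^{j\gamma}\mu(I_k)$. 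Summing over $k\ge j$ gives $2^{j\gamma}m_j$. For $k<j$ the term is comparable to $2^{k\gamma}\mu(I_k)\le 2^{k\gamma}m_k$. Hence, on $E_j$,
\[
 P_{\mu,\gamma}\asymp 2^{j\gamma}m_j+\sum_{k<j}2^{k\gamma}\mu(I_k).
\]
The lower bound $P\gtrsim 2^{j\gamma}m_j$ on $E_j$ is immediate, since $|1-te^{i\theta}|\lesssim 2^{-j}$ for $t\ge a_j$ and $\theta\in E_j$. Integrating over $E_j$, whose measure is $|E_j|\asymp2^{-j}$, gives $\int_{E_j}P^r\gtrsim 2^{j(\gamma r-1)}m_j^r=d_j^r$. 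Summing over $j$ yields $\|P\|_r^r\gtrsim\sum d_j^r$. Here we used $\sigma r=\gamma r-1$.

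The upper bound is the main obstacle, because of the cross term $\sum_{k<j}2^{k\gamma}m_k$. Write
\[
 2^{-j/r}\sum_{k<j}2^{k\gamma}m_k=\sum_{k<j}2^{-(j-k)/r}\,2^{k\sigma}m_k=\sum_{k<j}2^{-(j-k)/r}d_k .
\]
This is a discrete convolution of $\{d_k\}$ with the kernel $b_\ell=2^{-\ell/r}\mathbf 1_{\ell\ge1}$. The kernel lies in $\ell^1$ and also in $\ell^u$ for every $u>0$. When $r\ge1$, Lemma~\ref{lem:discrete-young}(a) with $u=r$ gives $\ell^r$ control by $\sum d_k^r$. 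When $0<r<1$, part (b) of the same lemma does this directly. Hence
\[
 \|P\|_r^r\lesssim\sum_j 2^{-j}\Bigl(2^{j\gamma}m_j+\sum_{k<j}2^{k\gamma}m_k\Bigr)^r\lesssim\sum_j d_j^r .
\]
This uses $(x+y)^r\lesssim x^r+y^r$. One also has to cover the remaining arc, $|\theta|$ of order one, but that is absorbed into $j=0$. This proves \eqref{eq:potential-dyadic-equivalence}.

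For \eqref{eq:integral-dyadic-equivalence}, split the integral over $[a_j,a_{j+1})$. There $1-t\asymp2^{-j}$, $\int_{a_j}^{a_{j+1}}dt/(1-t)=\log2$, and $m_{j+1}\le\mu([t,1))\le m_j$. Therefore the integral lies between $c\sum_j 2^{j\sigma r}m_{j+1}^r$ and $C\sum_j d_j^r$. Since $2^{j\sigma}m_{j+1}\asymp d_{j+1}$, the lower bound gives $\sum_{j\ge1}d_j^r$. The missing term $d_0=m_0=\mu([0,1))$ is supplied by the added $\mu([0,1))^r$. Conversely, $d_0^r=\mu([0,1))^r$ is itself one of the terms of $\sum d_j^r$, so both comparisons hold and the final equivalence follows.
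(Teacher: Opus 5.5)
Your proposal is correct and follows essentially the same route as the paper: the kernel geometry $|1-te^{i\theta}|\asymp(1-t)+|\theta|$, the dyadic arcs $E_j$ with the lower bound coming from $t\in[a_j,1)$, the shell decomposition reducing the upper bound to the geometric convolution $\sum_{k\le j}2^{-(j-k)/r}d_k$ controlled by Lemma~\ref{lem:discrete-young}(a) or (b) according as $r\ge1$ or $r<1$, and the shell-by-shell comparison of the tail integral, with the mass term supplying $d_0$. The only cosmetic difference is that you isolate the diagonal term $2^{j\gamma}m_j$ and use $(x+y)^r\lesssim x^r+y^r$, whereas the paper includes it in the convolution kernel at $\ell=0$; it also treats the arc $\{|\theta|>1/2\}$ separately instead of absorbing it into $j=0$.
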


\begin{proof}
For $|\theta|\le \pi$ and $0\le t<1$, we have
\begin{equation}\label{eq:kernel-geometry}
	|1-te^{i\theta}|\asymp (1-t)+|\theta|,
\end{equation}
where the comparison constants are independent of $t$ and $\theta$.
Indeed, since
\[
1-te^{i\theta}=(1-t)+t(1-e^{i\theta})
\quad\text{and}\quad
|1-e^{i\theta}|=2\sin\frac{|\theta|}{2}\le|\theta|,
\]
the triangle inequality gives
\[
|1-te^{i\theta}|
\le (1-t)+t|1-e^{i\theta}|
\le (1-t)+|\theta|.
\]
For the reverse estimate, it cleat that
$
1-t\le |1-te^{i\theta}|.
$
Moreover, the elementary inequality
$\sin x\ge 2x/\pi$ for $0\le x\le\pi/2$ implies
\[
\frac{2}{\pi}|\theta|
\le |1-e^{i\theta}| \le |1-te^{i\theta}|+(1-t)
\le 2|1-te^{i\theta}|.
\]
This proves \eqref{eq:kernel-geometry}.

Let
\[
 E_k=\{\theta:2^{-k-1}<|\theta|\le2^{-k}\},\qquad k\ge1.
\]
If $\theta\in E_k$ and $t\in[a_k,1)$, then \eqref{eq:kernel-geometry} gives
$|1-te^{i\theta}|\lesssim2^{-k}$, and hence
\[
 P_{\mu,\gamma}(e^{i\theta})\gtrsim2^{k\gamma}m_k.
\]
Therefore
\[
 \int_{E_k}P_{\mu,\gamma}(e^{i\theta})^r\,d\theta
 \gtrsim2^{-k}2^{k\gamma r}m_k^r=d_k^r.
\]
Moreover, $|1-te^{i\theta}|\le2$ for all $t$ and $\theta$, so
$P_{\mu,\gamma}(e^{i\theta})\ge2^{-\gamma}m_0$. Thus
\[
 \norm{P_{\mu,\gamma}}_{L^r}^r\gtrsim\sum_{k=0}^\infty d_k^r.
\]

For the reverse estimate, fix $k\ge1$ and $\theta\in E_k$. Since $a_0=0$, we have
the disjoint decomposition
\[
 [0,1)
 =
 \left(\bigcup_{j=0}^{k-1}[a_j,a_{j+1})\right)
 \cup[a_k,1).
\]
Consequently,
\begin{align*}
 P_{\mu,\gamma}(e^{i\theta})
 &=
 \sum_{j=0}^{k-1}
 \int_{[a_j,a_{j+1})}
 \frac{d\mu(t)}{|1-te^{i\theta}|^\gamma}
 +
 \int_{[a_k,1)}
 \frac{d\mu(t)}{|1-te^{i\theta}|^\gamma}.
\end{align*}
If $t\in[a_j,a_{j+1})$, then
$
 2^{-j-1}<1-t\le2^{-j}.
$
This gives
\[
 |1-te^{i\theta}|\ge1-t>2^{-j-1},
\]
and hence
\[
 \int_{[a_j,a_{j+1})}
 \frac{d\mu(t)}{|1-te^{i\theta}|^\gamma}
\lesssim 2^{j\gamma}\mu([a_j,a_{j+1})).
\]
For the remaining interval $[a_k,1)$, by \eqref{eq:kernel-geometry} and the fact that
$\theta\in E_k$,
\[
 |1-te^{i\theta}|
 \gtrsim (1-t)+|\theta|
 \ge|\theta|>2^{-k-1}.
\]
Therefore,
\[
 \int_{[a_k,1)}
 \frac{d\mu(t)}{|1-te^{i\theta}|^\gamma}
 \lesssim
 2^{k\gamma}\mu([a_k,1))
 =
 2^{k\gamma}m_k.
\]
Combining these estimates and using
\[
 \mu([a_j,a_{j+1}))=m_j-m_{j+1}\le m_j,
\]
we obtain
\begin{align*}
 P_{\mu,\gamma}(e^{i\theta})
 \lesssim
 \sum_{j=0}^{k-1}
 2^{j\gamma}\mu([a_j,a_{j+1}))
 +2^{k\gamma}m_k
 \le
 \sum_{j=0}^{k}2^{j\gamma}m_j.
\end{align*}
The preceding pointwise estimate gives
\[
 P_{\mu,\gamma}(e^{i\theta})
\lesssim 2^{k/r}\sum_{j=0}^{k}2^{-(k-j)/r}d_j,
 \qquad \theta\in E_k.
\]
Extend $d$ by zero to the negative integers and define the one-sided
kernel on $\mathbb Z$ by
\[
 b^{(r)}_\ell=
 \begin{cases}
 2^{-\ell/r},&\ell\ge0,\\
 0,&\ell<0.
 \end{cases}
\]
Then
\[
 \norm{b^{(r)}}_{\ell^1(\mathbb Z)}
 =\frac{1}{1-2^{-1/r}}<\infty,
 \qquad
 (b^{(r)}*d)_k=\sum_{j=0}^k2^{-(k-j)/r}d_j
 \quad(k\ge0).
\]
For $k\ge1$, using $|E_k|=2^{-k}$ we have
\[
 \int_{E_k}P_{\mu,\gamma}(e^{i\theta})^r\,d\theta
\lesssim 2^k|E_k|\,|(b^{(r)}*d)_k|^r=
|(b^{(r)}*d)_k|^r.
\] 
If $\theta \in A:=\{\theta\in[-\pi,\pi]:|\theta|>1/2\}$, the earlier estimate \eqref{eq:kernel-geometry} 
and $(b^{(r)}*d)_0=d_0=m_0$ give
\[
 \int_A P_{\mu,\gamma}(e^{i\theta})^r\,d\theta
 \lesssim|(b^{(r)}*d)_0|^r.
\]

If $r\ge1$, use Lemma~\ref{lem:discrete-young}\textup{(a)} and
$\|b^{(r)}\|_{\ell^1}=(1-2^{-1/r})^{-1}$. If $0<r<1$, use
part \textup{(b)} instead, since
\[
 \|b^{(r)}\|_{\ell^r}^r=\sum_{\ell\ge0}2^{-\ell}=2.
\]
In either case,  we  have
\begin{align*}
 \norm{P_{\mu,\gamma}}_{L^r(\T)}^r
 \lesssim\sum_{k\ge0}d_k^r.
\end{align*}
Together with the lower estimate, this proves
\eqref{eq:potential-dyadic-equivalence}.

For every $j\ge0$ and $t\in[a_j,a_{j+1})$, one has
$1-t\asymp2^{-j}$ and $m_{j+1}\le\mu([t,1))\le m_j$. Consequently,
\[
 2^{j\sigma r}m_{j+1}^r
 \lesssim
 \int_{a_j}^{a_{j+1}}
 \left(\frac{\mu([t,1))}{(1-t)^\sigma}\right)^r\frac{dt}{1-t}
 \lesssim 2^{j\sigma r}m_j^r.
\]
This gives
\[
 2^{-\sigma r}\sum_{j=1}^\infty d_j^r
 \lesssim  \int_{0}^{1}\left(\frac{\mu([t,1))}{(1-t)^\sigma}\right)^r\frac{dt}{1-t}\lesssim\sum_{j=0}^\infty d_j^r.
\]
Since $d_0^r=m_0^r=\mu([0,1))^r$, adding this term yields
\[
 m_0^r+\int_{0}^{1}\left(\frac{\mu([t,1))}{(1-t)^\sigma}\right)^r\frac{dt}{1-t}\asymp\sum_{j=0}^\infty d_j^r,
\]
which proves \eqref{eq:integral-dyadic-equivalence}. The mass term is
needed because an atom at $0$ is not detected by the integral over
$(0,1)$.
\end{proof}

The next lemma supplies the test functions used for necessity.

\begin{lemma}\label{lem:synthesis}
Let $0< p<\infty$, and put $a_j=1-2^{-j}$ for $j\ge0$.
Define
\begin{equation}\label{eq:kj}
 k_j(z)=\frac{(1-a_j^2)^{1/p}}{(1-a_jz)^{2/p}},
 \qquad z\in\mathbb D.
\end{equation}
Then $\|k_j\|_{H^p}=1$ for every $j\ge0$.
Moreover, for every complex sequence
$c=\{c_j\}_{j\ge0}\in\ell^p$, the series
\[
 F(z):=\sum_{j=0}^{\infty}c_jk_j(z)
\]
converges in $H^p$ and absolutely and uniformly on compact
subsets of $\mathbb D$. There exists a constant $C_p>0$,
depending only on $p$, such that
\begin{equation}\label{eq:synthesis-est}
 \|F\|_{H^p}
 \le C_p\left(\sum_{j=0}^{\infty}|c_j|^p\right)^{1/p}.
\end{equation}
\end{lemma}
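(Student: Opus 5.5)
First I will compute the norm of each $k_j$. Since $|k_j|^p=(1-a_j^2)/|1-a_jz|^2$ is the Poisson kernel at $a_j$, we get $M_p^p(\rho,k_j)=(1-a_j^2)/(1-a_j^2\rho^2)$. This increases to $1$ as $\rho\to1$, so $\|k_j\|_{H^p}=1$. Local uniform convergence follows from a simple bound on compact subsets $|z|\le\rho$. There $|k_j(z)|\le 2^{1/p}2^{-j/p}(1-\rho)^{-2/p}$. Since $c\in\ell^p$ gives $\sup_j|c_j|<\infty$, the series $\sum_j|c_j|2^{-j/p}$ converges, and so $F$ converges absolutely and uniformly on compacta.

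For the estimate \eqref{eq:synthesis-est} I will split into two cases according to $p$. If $0<p\le1$, the quasi-norm is $p$-subadditive, $\|\sum f_j\|_p^p\le\sum\|f_j\|_p^p$. Applying this to partial sums gives \eqref{eq:synthesis-est} with $C_p=1$, together with Cauchy convergence of the partial sums in $H^p$.

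The case $p>1$ is the main step. I will work on $\T$ and bound the kernels by a dyadic pointwise estimate. By \eqref{eq:kernel-geometry}, $|k_j(e^{i\theta})|\lesssim 2^{-j/p}(2^{-j}+|\theta|)^{-2/p}$. With $E_k$ as in the proof of Lemma~\ref{lem:potential}, this gives, for $\theta\in E_k$,
\[
 |k_j(e^{i\theta})|\lesssim
 \begin{cases}2^{j/p}, & j\le k,\\ 2^{-j/p}2^{2k/p}, & j>k.\end{cases}
\]
Hence on $E_k$ we have $\sum_j|c_j||k_j|\lesssim 2^{k/p}\bigl(\sum_{j\le k}2^{-(k-j)/p}|c_j|+\sum_{j>k}2^{-(j-k)/p}|c_j|\bigr)=2^{k/p}(b*|c|)_k$, where $b_\ell=2^{-|\ell|/p}$ is a two-sided kernel in $\ell^1(\mathbb Z)$. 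Since $|E_k|\asymp2^{-k}$, integrating gives $\int_{E_k}(\sum_j|c_j||k_j|)^p\,d\theta\lesssim (b*|c|)_k^p$. The set $|\theta|>1/2$ is handled the same way as the term $k=0$. Lemma~\ref{lem:discrete-young}(a) with $u=p$ then yields $\|\sum_j|c_j||k_j|\|_{L^p(\T)}\lesssim\|c\|_{\ell^p}$.

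To transfer this boundary bound to $H^p$, I will use the same majorant at every radius. Each $k_j$ lies in $H^p$, and the partial sums are dominated pointwise on $\T$ by the $L^p$ majorant $G=\sum_j|c_j||k_j|$. Therefore the tails $\sum_{j>N}c_jk_j$ have $H^p$ norm at most $\|\sum_{j>N}|c_j||k_j|\|_{L^p}$. This tends to $0$ by dominated convergence, which gives convergence in $H^p$, and the limit agrees with the locally uniform limit $F$. The remaining obstacle is keeping the constants depending only on $p$. I expect this to be routine, because the kernel $b$ and the geometry constants in \eqref{eq:kernel-geometry} depend only on $p$.
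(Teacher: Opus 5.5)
Your proposal is correct and follows essentially the same route as the paper: the Poisson-kernel computation of $\|k_j\|_{H^p}$, $p$-subadditivity for $0<p\le1$, the dyadic arcs $E_k$ with the two-sided kernel $2^{-|\ell|/p}$ and Young's inequality for $p>1$, and the compact-set bound for locally uniform convergence. The only difference is minor: you get the Cauchy property from the $L^p$ majorant $G$ and dominated convergence, whereas the paper applies the finitely supported estimate to the blocks $\sum_{N<j\le M}c_jk_j$ directly. Either way the domination should be applied to these finite blocks, whose $H^p$ norm equals their $L^p(\T)$ norm because they are continuous on $\overline{\D}$, rather than to the infinite tails.
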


\begin{proof}

The equality $\|k_j\|_{H^p}=1$ follows from the Poisson kernel
identity. Suppose first that $c$ is a finitely supported
complex sequence and put
$
 F=\sum_{j\ge0}c_jk_j.
$
For $0<p\le1$, subadditivity gives directly
\begin{equation}\label{eq:synthesis-quasi-finite}
 \|F\|_{H^p}^p\le\sum_j|c_j|^p\|k_j\|_{H^p}^p
 =\sum_j|c_j|^p.
\end{equation}

For $p>1$, use the arcs $E_m,A$ from Lemma~\ref{lem:potential}. Since $1-a_j^2\asymp2^{-j}$, \eqref{eq:kernel-geometry} yields,
\[
 |k_j(e^{i\theta})|
 \lesssim
 \begin{cases}
 2^{j/p},& j\le m,\\
 2^{(2m-j)/p},& j>m.
 \end{cases}
\]
Consequently,
\begin{align*}
 2^{-m/p}\sum_j|c_j||k_j(e^{i\theta})|
 &\lesssim
 \sum_{j\le m}2^{-(m-j)/p}|c_j|
 +\sum_{j>m}2^{-(j-m)/p}|c_j|\\
 &\le \sum_j2^{-|m-j|/p}|c_j|.
\end{align*}
Extend $c$ by zero to the negative integers and define
\[
 v^{(p)}_\ell=2^{-|\ell|/p},\qquad \ell\in\mathbb Z.
\]
Then
\[
 \norm{v^{(p)}}_{\ell^1(\mathbb Z)}
 =1+2\sum_{\ell=1}^\infty2^{-\ell/p}
 =\frac{1+2^{-1/p}}{1-2^{-1/p}}<\infty,
\]
and
\[
 (v^{(p)}*|c|)_m=\sum_{j\ge0}2^{-|m-j|/p}|c_j|.
\]
As $|E_m|=2^{-m}$, the preceding estimate gives
\[
 \int_{E_m}\left|F(e^{i\theta})\right|^p\,d\theta
\lesssim2^m|E_m|\,|(v^{(p)}*|c|)_m|^p
 =|(v^{(p)}*|c|)_m|^p.
\]

On the fixed arc $A$, the lower bound proved in
Lemma~\ref{lem:potential}  also gives
\[
 \int_A \left|F(e^{i\theta})\right|^p\,d\theta
 \lesssim |(v^{(p)}*|c|)_0|^p.
\]
Combining the estimates on $A$ and the $E_m$, and applying
Lemma~\ref{lem:discrete-young}\textup{(a)} with $u=p$, gives
\begin{align*}
 \norm{F}_{H^p}^p
 &\lesssim\sum_{m\ge0}|(v^{(p)}*|c|)_m|^p\\
 &\le \norm{v^{(p)}}_{\ell^1(\mathbb Z)}^p
             \norm{c}_{\ell^p(\mathbb Z)^p}\\
 &\lesssim \sum_{j\ge0}|c_j|^p.
\end{align*}
Taking $p$-th roots proves \eqref{eq:synthesis-est}.

For an arbitrary sequence $c\in\ell^p$, put
\[
 F_N=\sum_{j=0}^{N}c_jk_j.
\]
The estimate for finitely supported sequences gives, for $M>N$,
\[
 \|F_M-F_N\|_{H^p}
 \le C_p
 \left(\sum_{j=N+1}^{M}|c_j|^p\right)^{1/p}.
\]
Thus $\{F_N\}$ is Cauchy in $H^p$ and converges to some
$F\in H^p$. Passing to the limit yields
\[
 \|F\|_{H^p}\le C_p\|c\|_{\ell^p},
 \qquad
 \|F-F_N\|_{H^p}
 \le C_p\left(\sum_{j>N}|c_j|^p\right)^{1/p}.
\]

Fix $0<\rho<1$. For $|z|\le\rho$, we have
\[
 |1-a_jz|\ge1-a_j|z|\ge1-\rho,
\]
and hence
\[
 \sup_{|z|\le\rho}|k_j(z)|
 \le\frac{2^{1/p}}{(1-\rho)^{2/p}}\,2^{-j/p}.
\]
Since $\|c\|_{\ell^\infty}\le\|c\|_{\ell^p}$,
\begin{align*}
 \sum_{j=0}^{\infty}\sup_{|z|\le\rho}|c_jk_j(z)|
 &\le\frac{2^{1/p}}{(1-\rho)^{2/p}}
       \sum_{j=0}^{\infty}|c_j|2^{-j/p}\\
 & \lesssim \frac{\|c\|_{\ell^p}}
 {(1-\rho)^{2/p}}<\infty.
\end{align*}
Thus the series $\sum_{j\ge0}c_jk_j$ converges absolutely and
uniformly on every compact subset of $\mathbb D$. Its sum agrees with $F$. In fact, the  point-evaluation
estimate gives
\[
 \sup_{|z|\le\rho}|F_N(z)-F(z)|
 \le C_{p,\rho}\|F_N-F\|_{H^p}\longrightarrow0.
\]
Consequently, $F=\sum_{j\ge0}c_jk_j$.
 This completes the proof.
\end{proof}

\begin{lemma}\label{lem:sampling}
For $a_j=1-2^{-j}$, the discrete measure
\[
 \nu=\sum_{j=0}^\infty(1-a_j)\delta_{a_j}
\]
is a Carleson measure on $\D$. Consequently, for every $0<q<\infty$,
\begin{equation*}\label{eq:sampling}
 \sum_{j=0}^\infty(1-a_j)|g(a_j)|^q
 \lesssim \norm{g}_{H^q}^q.
\end{equation*}
\end{lemma}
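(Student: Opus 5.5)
Since the sampling inequality is the defining property of Carleson measures (Carleson's embedding theorem), the plan is to verify the Carleson box condition for $\nu$ and then invoke Carleson's theorem. Carleson's theorem says that a positive measure $\nu$ on $\D$ satisfies $\int_{\D}|g|^q\,d\nu\lesssim\norm{g}_{H^q}^q$ for all $g\in H^q$ and all $0<q<\infty$ if and only if
\[
 \nu\bigl(S(I)\bigr)\lesssim|I|
\]
for every arc $I\subset\T$. Here $S(I)=\{re^{i\theta}:e^{i\theta}\in I,\ 1-|I|\le r<1\}$ is the Carleson box. The implied constant depends only on $q$ and the Carleson constant of $\nu$.

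To check the box condition, fix an arc $I$ with $|I|=h$, normalized so that $h\le 1$. All atoms of $\nu$ lie on $[0,1)$, so only boxes whose closures meet the positive real axis matter. For such a box, an atom $a_j$ can lie in $S(I)$ only if $1-a_j=2^{-j}\le h$. Therefore
\[
 \nu(S(I))\le\sum_{j:\,2^{-j}\le h}2^{-j}\le 2h,
\]
which is a geometric series dominated by twice its first term. For large arcs, $h\ge1$, the trivial bound $\nu(\D)=\sum_j 2^{-j}=2\lesssim h$ suffices. Hence $\nu$ is a Carleson measure.

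The conclusion then follows immediately:
\[
 \sum_{j}(1-a_j)|g(a_j)|^q=\int_{\D}|g|^q\,d\nu\lesssim\norm{g}_{H^q}^q.
\]

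None of these steps is a real obstacle; the only care needed is the arc-length normalization of boxes with respect to $d\theta/(2\pi)$, which only affects constants. Alternatively, one can avoid Carleson's theorem: the pointwise bound $|g(re^{i\theta})|\le C\,(g^*(e^{i\theta}))$ for $e^{i\theta}$ near $1$ within a fixed nontangential approach region gives $|g(a_j)|^q\lesssim 2^{j}\int_{|\theta|\le 2^{-j}}(g^*)^q\,d\theta$. Summing in $j$ with weights $2^{-j}$ gives $\int (g^*)^q\sum_j\chi_{\{|\theta|\le2^{-j}\}}2^{-j}2^j\,d\theta$. That weight is not bounded, so this shortcut fails, and we rely on Carleson's theorem as planned.
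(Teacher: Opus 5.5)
Your proof is correct and is essentially the paper's: the paper checks the radial tail bound $\sum_{a_j\ge x}(1-a_j)\lesssim 1-x$, which is your box estimate $\nu(S(I))\le\sum_{2^{-j}\le h}2^{-j}\le 2h$, and then cites Carleson's embedding theorem. One correction to your closing aside: the direct maximal-function route does not actually fail, because for $2^{-j-1}<|\theta|\le 2^{-j}$ one has $|a_j-e^{i\theta}|\le 2(1-a_j)$, so averaging $(g^*)^q$ over these disjoint arcs rather than over $\{|\theta|\le 2^{-j}\}$ removes the logarithmic overlap and proves the estimate without Carleson's theorem.
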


\begin{proof}
The radial tail satisfies
\[
 \sum_{a_j\ge x}(1-a_j)\lesssim1-x,
\]
so that  $\nu$ is  a Carleson measure. The estimate is the classical Carleson embedding theorem for Hardy spaces, see \cite{Duren1970}.
\end{proof}

For Banach exponents, the next result is a special
case of \cite[Proposition~8]{OrhonTerzioglu1972}; see also
\cite[Section~2]{Fischer2020}. We include the elementary argument
because the lower-triangle proof also needs exponents below $1$.

\begin{lemma}\label{lem:diagonal}
Let $0< q<p<\infty$ and $1/r=1/q-1/p$. For a nonnegative sequence $d=\{d_j\}$, the diagonal map
\[
 D_d:\ell^p\to\ell^q,\qquad D_d(c)=\{d_jc_j\},
\]
is bounded if and only if $d\in\ell^r$. Moreover,
\[
 \norm{D_d}_{\ell^p\to\ell^q}=\norm{d}_{\ell^r}.
\]
\end{lemma}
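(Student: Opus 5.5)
The plan is to prove the two inequalities $\|D_d\|_{\ell^p\to\ell^q}\le\|d\|_{\ell^r}$ and $\|D_d\|_{\ell^p\to\ell^q}\ge\|d\|_{\ell^r}$ separately. The only tools needed are Hölder's inequality with the conjugate exponents $p/q$ and $r/q$, a finite-section test vector, and monotone convergence.

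\emph{Sufficiency and the upper bound.} Since $q<p$ and $q<r$, the numbers $P=p/q>1$ and $R=r/q>1$ are defined. They are conjugate, because $1/P+1/R=q/p+q/r=q(1/p+1/r)=1$. For $c\in\ell^p$, apply Hölder's inequality to the sequences $d_j^q$ and $|c_j|^q$:
\[
 \sum_j d_j^q|c_j|^q\le\Bigl(\sum_j d_j^{r}\Bigr)^{q/r}\Bigl(\sum_j|c_j|^{p}\Bigr)^{q/p}.
\]
This is valid for every $0<q<p<\infty$, including $q<1$, because Hölder is applied to the $q$-th powers and never to a quasi-norm. Taking $q$-th roots gives $\|D_dc\|_{\ell^q}\le\|d\|_{\ell^r}\|c\|_{\ell^p}$.

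\emph{Necessity and the lower bound.} Assume $D_d$ is bounded, and fix $N$. Test on the finitely supported vector $c_j=d_j^{r/p}$ for $j\le N$, and $c_j=0$ otherwise. Then $\|c\|_{\ell^p}^p=\sum_{j\le N}d_j^r$. Since $r/p+1=r/q$ (equivalently $1/q=1/p+1/r$), we have $d_jc_j=d_j^{r/q}$, and so $\|D_dc\|_{\ell^q}^q=\sum_{j\le N}d_j^r$. Write $S_N=\sum_{j\le N}d_j^r$. Boundedness gives $S_N^{1/q}\le\|D_d\|\,S_N^{1/p}$. If $S_N>0$, dividing yields $S_N^{1/r}\le\|D_d\|$, and this bound holds trivially when $S_N=0$. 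Letting $N\to\infty$ by monotone convergence shows $d\in\ell^r$ with $\|d\|_{\ell^r}\le\|D_d\|$.

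Combining the two parts gives equality of the norms.

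I expect no real obstacle here. The only points needing care are checking the exponent identities $1/P+1/R=1$ and $r/p+1=r/q$, and using finite truncation in the necessity step, so that the test vector is certainly in $\ell^p$ before $d\in\ell^r$ is known.
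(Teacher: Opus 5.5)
Your proof is correct and follows the paper's argument essentially verbatim. The upper bound comes from H\"older's inequality with the conjugate exponents $r/q$ and $p/q$ applied to the $q$-th powers, and the lower bound from the finitely supported test vector $c_j=d_j^{r/p}$, the identity $q(1+r/p)=r$, division by the partial sum (with the zero case handled separately), and passage to the limit over increasing finite index sets.
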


\begin{proof}
If $d\in\ell^r$, apply H\"older's inequality to
$\sum_jd_j^q|c_j|^q$ with exponents $r/q$ and $p/q$.
Both exceed $1$, and $q/r+q/p=1$, so
$\|D_dc\|_{\ell^q}\le\|d\|_{\ell^r}\|c\|_{\ell^p}$.
Conversely, suppose $\|D_dc\|_{\ell^q}\le L\|c\|_{\ell^p}$
for finite nonnegative sequences. On a finite set $J$, take
$c_j=d_j^{r/p}$ and put $c_j=0$ off $J$.
Since $q(1+r/p)=r$, this gives
\[
 \left(\sum_{j\in J}d_j^r\right)^{1/q}
 \le L\left(\sum_{j\in J}d_j^r\right)^{1/p}.
\]
Unless the sum is zero, divide to obtain
$(\sum_{j\in J}d_j^r)^{1/r}\le L$; the zero case is immediate.
Letting $J$ increase proves the lower norm bound.  Replacing a complex sequence by
its modulus and taking finite truncations justifies the last assertion.
\end{proof}

The following comparison applies to nondecreasing test functions. 

\begin{lemma}\label{lem:tail-comparison}
Let $s>0$, and let $\mu$ be a positive Borel measure on $[0,1)$ such that
\[
 \mu([a,1))\le K(1-a)^s,\qquad 0\le a<1,
\]
for some $K>0$. If $h:[0,1)\to[0,\infty]$ is a nonnegative,
nondecreasing Borel function, then
\begin{equation*}\label{eq:tail-comparison}
 \int_{[0,1)}h(t)\,d\mu(t)
 \le Ks\int_0^1h(t)(1-t)^{s-1}\,dt.
\end{equation*}
The integrals are understood in the extended nonnegative sense.
\end{lemma}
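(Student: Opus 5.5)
We prove Lemma~\ref{lem:tail-comparison} with the layer-cake (Fubini--Tonelli) representation, exploiting that $h$ is nondecreasing, so its superlevel sets are right-tail intervals of $[0,1)$. For $\lambda>0$ put $U_\lambda=\{t\in[0,1):h(t)>\lambda\}$. Monotonicity of $h$ gives that $U_\lambda$ is either empty or an interval of the form $[a,1)$ or $(a,1)$ with $0\le a<1$. Then
\[
 \int_{[0,1)}h\,d\mu=\int_0^\infty\mu(U_\lambda)\,d\lambda,
 \qquad
 Ks\int_0^1h(t)(1-t)^{s-1}\,dt=\int_0^\infty \nu_s(U_\lambda)\,d\lambda,
\]
where $d\nu_s(t)=Ks(1-t)^{s-1}dt$. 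Both identities hold for nonnegative Borel functions with values in $[0,\infty]$, by Tonelli applied to $\mathbf 1_{\{0<\lambda<h(t)\}}$.

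It therefore suffices to prove $\mu(U_\lambda)\le\nu_s(U_\lambda)$ for each $\lambda$. If $U_\lambda=[a,1)$, the hypothesis gives
\[
\mu([a,1))\le K(1-a)^s=\nu_s([a,1)).
\]
If $U_\lambda=(a,1)$, write $(a,1)=\bigcup_n[a+1/n,1)$ as an increasing union. Continuity from below then gives
\[
\mu((a,1))=\lim_n\mu([a+1/n,1))\le\lim_n K(1-a-1/n)^s=K(1-a)^s=\nu_s((a,1)).
\]
The empty case is trivial. Integrating in $\lambda$ yields the claim, including the case where either side is infinite.

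The only delicate point is checking that the superlevel sets really are tail intervals, including the open-endpoint case and the possibility that $h=\infty$ on a tail. Both are handled by the monotone limit above, so I expect no genuine obstacle. An alternative route is integration by parts with the Lebesgue--Stieltjes measure $dh$, writing
\[
\int h\,d\mu=h(0)\mu([0,1))+\int\mu([t,1))\,dh(t)
\]
up to endpoint conventions. This needs more care with jumps of $h$, so the layer-cake argument is preferable.
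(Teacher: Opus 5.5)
Your proof is correct and is the same argument the paper uses. Both write each side via the layer-cake/Tonelli representation against $d\nu_s$. Both observe that the superlevel sets of the nondecreasing $h$ satisfy $(a,1)\subseteq\{h>\lambda\}\subseteq[a,1)$, and both compare $\mu$ and $\nu_s$ on these tail sets.

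The only difference is in the open-endpoint case. You use continuity from below, where only $n$ with $a+1/n<1$ should enter. The paper instead uses monotonicity, $\mu(E_\lambda)\le\mu([a_\lambda,1))\le K(1-a_\lambda)^s=K\nu_s(E_\lambda)$, together with the fact that $\nu_s$ has no atoms. That route is slightly shorter and shows this case is not actually delicate.
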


\begin{proof}
Let $d\nu_s(t)=s(1-t)^{s-1}\,dt$, so that
$\nu_s([a,1))=(1-a)^s$. For $\lambda\ge0$, define
$$E_\lambda=\{t\in[0,1):h(t)>\lambda\}.$$
If $E_\lambda$ is nonempty and $a_\lambda=\inf E_\lambda$,
then
\[
 (a_\lambda,1)\subseteq E_\lambda\subseteq[a_\lambda,1).
\]
Indeed, for $t>a_\lambda$ there is $u\in E_\lambda$ with $u<t$,
so $h(t)\ge h(u)>\lambda$. Since $\nu_s$ has no atoms,
\[
 \nu_s(E_\lambda)=(1-a_\lambda)^s,
 \qquad
 \mu(E_\lambda)\le\mu([a_\lambda,1))
 \le K(1-a_\lambda)^s=K\nu_s(E_\lambda).
\]
The same comparison is immediate for an empty superlevel set.
By Fubini's theorem  we obtain
\begin{align*}
 \int_{[0,1)}h(t)\,d\mu(t)
 &=\int_0^\infty\mu(E_\lambda)\,d\lambda\\
 &\le K\int_0^\infty\nu_s(E_\lambda)\,d\lambda\\
 &=K\int_{[0,1)}h(t)\,d\nu_s(t)
 =Ks\int_0^1h(t)(1-t)^{s-1}\,dt.
\end{align*}
\end{proof}

We shall use the following embedding, see \cite[Theorem~5.11]{Duren1970} and \cite{GirelaMarquez1998}.

\begin{lemma}\label{lem:HL-small-p}
Let $0<p\le1$, $1\le q\le\infty$, and $p<q$. Then
\begin{equation*}\label{eq:HL-small-p}
 \int_0^1(1-t)^{1/p-1/q-1}M_q(t,f)\,dt
 \lesssim \norm{f}_{H^p},\qquad f\in H^p.
\end{equation*}
\end{lemma}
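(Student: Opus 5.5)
This is a classical Hardy--Littlewood mixed-mean inequality, so we do not re-derive it from scratch; instead we reduce it to two standard facts. The first is the growth estimate for $H^p$ functions,
\[
 M_\infty(t,f)\lesssim (1-t)^{-1/p}\|f\|_{H^p},
\]
and more generally $M_q(t,f)\lesssim(1-t)^{1/q-1/p}\|f\|_{H^p}$. The second is the sharper Hardy--Littlewood theorem \cite[Theorem~5.11]{Duren1970}, which, for $0<p<q\le\infty$ and $\lambda\ge p$, gives
\[
 \left(\int_0^1(1-t)^{\lambda(1/p-1/q)-1}M_q(t,f)^\lambda\,dt\right)^{1/\lambda}\lesssim\|f\|_{H^p}.
\]
Taking $\lambda=1$ is allowed exactly when $p\le1$. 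That choice produces the displayed estimate of Lemma~\ref{lem:HL-small-p}, which explains the hypothesis $0<p\le1$.

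If we want a self-contained route instead, we would argue as follows.
\begin{itemize}
\item First reduce to $q=\infty$. For $q<\infty$, use the interpolation (log-convexity) inequality $M_q(t,f)\le M_p(t,f)^{p/q}M_\infty(t,f)^{1-p/q}\le\|f\|_{H^p}^{p/q}M_\infty(t,f)^{1-p/q}$.
\item Next, apply H\"older's inequality in $t$. We do not expect this to close by itself, so we plan to fall back on the atomic decomposition, or on the factorization $f=Bg$ with $g$ zero-free and $g^{p}\in H^1$. This reduces the problem to $p=1$ applied to the function $g^p$.
\item The key case is $p=1$, $q=\infty$. The inequality $\int_0^1 M_\infty(t,f)\,dt\lesssim\|f\|_{H^1}$ is the Fej\'er--Riesz/Hardy inequality in its maximal form: $\int_0^1|f(te^{i\theta})|\,dt\le\frac12\|f\|_{H^1}$ holds for each fixed $\theta$. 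To upgrade this to the $M_\infty$ version, we would use the subharmonic mean-value property on disks of radius $\asymp 1-t$. That controls $M_\infty(t,f)$ by an average of $|f|$ over a Carleson box, and the Carleson-measure embedding then finishes this case.
\end{itemize}

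The main obstacle is the passage from $L^1$ bounds on radial segments to the supremum $M_\infty$. It is also why the exponent $\lambda=1$ requires $p\le1$. Since the lemma is quoted from \cite{Duren1970,GirelaMarquez1998}, our plan is to cite Theorem~5.11 there with $\lambda=1$ and only indicate the reductions above.
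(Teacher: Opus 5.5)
Your main route is the same as the paper's: the paper gives no argument of its own and simply quotes \cite[Theorem~5.11]{Duren1970} (see also \cite{GirelaMarquez1998}), and the case $\lambda=1$ is exactly the displayed estimate, admissible because the requirement $\lambda\ge p$ forces $p\le1$. If you keep the ``self-contained'' indications, drop the first one: bounding $M_q$ by $M_p^{p/q}M_\infty^{1-p/q}$ leaves you needing $\int_0^1(1-t)^{\lambda/p-1}M_\infty(t,f)^{\lambda}\,dt\lesssim\norm{f}_{H^p}^{\lambda}$ with $\lambda=1-p/q$, and this is false whenever $1-p/q<p$ (e.g.\ $p=1$ and any finite $q$), as $f(z)=(1-z)^{-1/p}\bigl(\log\frac{e}{1-z}\bigr)^{-\beta}$ with $1/p<\beta\le1/\lambda$ shows.
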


\section{The upper triangle}\label{sec:upper}

We prove Theorem \ref{thm:upper-improved}. Necessity is common to all
indices in the statement. For $0<p\le1$, sufficiency follows from
Lemmas \ref{lem:tail-comparison} and \ref{lem:HL-small-p}. For $1<p<q<\infty$, we use
a coefficient factorization, the Marcinkiewicz multiplier theorem, and
Hardy--Littlewood fractional integration. For $0<p\leq q <1$,  we adapt the  argument in the proof 
of \cite[Theorem 2.1]{Stempak1994} to an  positive Carleson measure and distinct Hardy exponents. 

\subsection{Necessity for all admissible indices}\label{subsec:upper-necessity}\hfill\\\indent

\begin{proof}[Necessity in Theorem \ref{thm:upper-improved}]
Let
\[
 N=\norm{\mathcal C_{\mu,\gamma}}_{H^p\to H^q}<\infty,
 \qquad s=\gamma+\frac1p-\frac1q.
\]
For $0\le a<1$, consider
\[
 f_a(z)=\frac{(1-a^2)^{1/p}}{(1-az)^{2/p}}.
\]
Then it is easy to see that  $\norm{f_a}_{H^p}=1$ for every
$p>0$. The  point-evaluation estimate of Hardy space yields
\begin{equation}\label{eq:upper-evaluation}
 |\mathcal C_{\mu,\gamma}f_a(a)|
 \lesssim N(1-a)^{-1/q},
\end{equation}
where the right-hand factor is $1$ when $q=\infty$.
For $a\le t<1$,
\[
 1-ta^2\le1-a^3\le3(1-a),\qquad
 1-ta\le1-a^2\le2(1-a).
\]
Since $1-a^2\ge1-a$, this gives
\begin{align*}
 \mathcal C_{\mu,\gamma}f_a(a)
 &\ge\int_{[a,1)}
 \frac{(1-a^2)^{1/p}}{(1-ta^2)^{2/p}(1-ta)^\gamma}\,d\mu(t)\\
 &\gtrsim
 \frac{\mu([a,1))}{(1-a)^{\gamma+1/p}}.
\end{align*}
Combining this lower bound with \eqref{eq:upper-evaluation} gives
\[
 \mathfrak C_s(\mu)
 :=\sup_{0\le a<1}\frac{\mu([a,1))}{(1-a)^s}
 \lesssim N.
\]
Thus $\mu$ is an
$s$-Carleson measure, and the lower estimate in
\eqref{eq:upper-norm-equivalence} holds throughout the stated range.
\end{proof}

\subsection{Sufficiency for \texorpdfstring{$0<p\le1,1\le q\le\infty, p<q$}{0<p<=1}}\hfill\\\indent

\begin{proof}
Assume $0<p\le1$, $1\le q\le\infty$, $p<q$, and
\[
 K=\mathfrak C_s(\mu)<\infty,
 \qquad s=\gamma+\frac1p-\frac1q>0.
\]
Fix $f\in H^p$ and $0<\rho<1$. By Minkowski's integral inequality  we have
\begin{align*}\label{eq:upper-small-p-minkowski}
 M_q(\rho,\mathcal C_{\mu,\gamma}f)
 &\le\int_{[0,1)}
 M_q\!\left(\rho,\frac{f(t\,\cdot)}{(1-t\,\cdot)^\gamma}\right)
 \,d\mu(t)\notag\\
 &\le\int_{[0,1)}
 \frac{M_q(\rho t,f)}{(1-\rho t)^\gamma}\,d\mu(t).
\end{align*}
For $q=\infty$, taking the supremum over the circle gives the same
inequality directly.

The function
\[
 h_\rho(t)=\frac{M_q(\rho t,f)}{(1-\rho t)^\gamma}
\]
is nonnegative, nondecreasing, and bounded on $[0,1)$. Since
$M_q(\rho t,f)\le M_q(t,f)$ and $1-\rho t\ge1-t$,   Lemma
\ref{lem:tail-comparison}  shows that 
\begin{align*}
 M_q(\rho,\mathcal C_{\mu,\gamma}f)
 & \lesssim \int_{[0,1)}
 \frac{M_q(\rho t,f)}{(1-\rho t)^\gamma}\,d\mu(t)\\
 &\le Ks\int_0^1
 \frac{M_q(\rho t,f)}{(1-\rho t)^\gamma}(1-t)^{s-1}\,dt\\
 &\le Ks\int_0^1M_q(t,f)(1-t)^{s-\gamma-1}\,dt\\
 &=Ks\int_0^1M_q(t,f)(1-t)^{1/p-1/q-1}\,dt\\
 &\lesssim K\norm{f}_{H^p}.
\end{align*}
The last estimate is Lemma \ref{lem:HL-small-p}. Taking the supremum
over $\rho$ we have 
\[
 \norm{\mathcal C_{\mu,\gamma}f}_{H^q}
 \lesssim \mathfrak C_s(\mu)\norm{f}_{H^p}.
\]\end{proof}
This completes the proof.
\subsection{Sufficiency for \texorpdfstring{$1<p<q<\infty$}{1<p<q<infinity}}\label{subsec:upper-Banach}\hfill\\\indent

Recall that $A_n^\eta=\Gamma(n+\eta)/(\Gamma(\eta)\Gamma(n+1))$. The Ces\`aro averaging operator of order $\gamma$ is
\begin{equation*}\label{eq:Cgamma-def}
 C^\gamma f(z)
 =\sum_{n=0}^\infty\frac1{A_n^{\gamma+1}}
 \left(\sum_{k=0}^nA_{n-k}^\gamma a_k\right)z^n,
 \qquad f(z)=\sum_{k=0}^\infty a_kz^k.
\end{equation*}

Andersen proved that
\begin{equation}\label{eq:Cgamma-Hp}
 C^\gamma:H^p\longrightarrow H^p
\end{equation}
is bounded for every $0<p<\infty$ and every $\gamma>0$; see \cite{Andersen1996}.
Moreover, with $F_\mu$ as in \eqref{eq:Fmugamma},
\begin{equation}\label{eq:C-factorization}
 \mathcal C_{\mu,\gamma}f=(R^\gamma F_\mu)\star C^\gamma f.
\end{equation}
Indeed, the $n$-th coefficient of $R^\gamma F_\mu$ is $A_n^{\gamma+1}\mu_n$, so the factors $A_n^{\gamma+1}$ cancel in the Hadamard product.

\begin{proof}[Sufficiency in Theorem \ref{thm:upper-improved} when $1<p<q<\infty$]
Set
\[
 \delta=\frac1p-\frac1q>0,
 \qquad s=\gamma+\delta.
\]
Suppose that $\mu$ is an $s$-Carleson measure and let
\[
 \mathfrak C_s(\mu)=
 \sup_{0\le t<1}\frac{\mu([t,1))}{(1-t)^s}.
\]
Define
\[
 \lambda_n=A_n^{\gamma+1}\mu_n,
 \qquad
 w_n=(n+1)^\delta A_n^{\gamma+1},
 \qquad
 b_n=w_n\mu_n.
\]
Then
\begin{equation}\label{eq:lambda-b-factor}
 \lambda_n=(n+1)^{-\delta}b_n,
 \qquad T_\lambda=J_\delta T_b.
\end{equation}
Since
$
 A_n^{\gamma+1}\asymp (n+1)^\gamma,
$
we have
\begin{equation}\label{eq:w-asymptotic}
 w_n\asymp(n+1)^s.
\end{equation}
Lemma \ref{lem:moment-decay} therefore yields
\begin{equation}\label{eq:b-bounded}
 \sup_{n\ge0}|b_n|\lesssim \mathfrak C_s(\mu).
\end{equation}

We next verify the dyadic bounded-variation condition.  The sequence $\{\mu_n\}$ is decreasing, while $\{w_n\}$ is increasing because
\[
 \frac{w_{n+1}}{w_n}
 =\left(\frac{n+2}{n+1}\right)^\delta
 \frac{n+\gamma+1}{n+1}>1.
\]
Hence, for $n\ge0$,
\begin{align*}
 |b_{n+1}-b_n|
 &=|w_{n+1}\mu_{n+1}-w_n\mu_n|\\
 &\le w_{n+1}(\mu_n-\mu_{n+1})
 +(w_{n+1}-w_n)\mu_n.
\end{align*}
Fix $N\ge1$.  Summing from $N$ to $2N-2$ gives
\begin{align*}
 \sum_{n=N}^{2N-2}|b_{n+1}-b_n|
 &\le
 w_{2N-1}(\mu_N-\mu_{2N-1})
 +\mu_N(w_{2N-1}-w_N)\\
 &\le 2w_{2N-1}\mu_N.
\end{align*}
Using \eqref{eq:w-asymptotic} and Lemma \ref{lem:moment-decay},
\[
 w_{2N-1}\mu_N\lesssim \mathfrak C_s(\mu).
\]
Together with \eqref{eq:b-bounded}, this gives
\[
 \sup_{N\ge1}
 \left(
 |b_{2N-1}|+
 \sum_{n=N}^{2N-2}|b_{n+1}-b_n|
 \right)
 \lesssim \mathfrak C_s(\mu).
\]
Lemma \ref{lem:marcinkiewicz} now implies
\begin{equation}\label{eq:Tb-upper}
 \norm{T_b}_{H^p\to H^p}
 \lesssim \mathfrak C_s(\mu).
\end{equation}
By Lemma \ref{lem:HL-fractional}, \eqref{eq:lambda-b-factor}, and \eqref{eq:Tb-upper},
\begin{equation*}\label{eq:Tlambda-upper}
 \norm{T_\lambda g}_{H^q}
 \lesssim \mathfrak C_s(\mu)\norm{g}_{H^p},
 \qquad g\in H^p.
\end{equation*}
Finally, \eqref{eq:C-factorization} and \eqref{eq:Cgamma-Hp} give
\[
 \norm{\mathcal C_{\mu,\gamma}f}_{H^q}
 =\norm{T_\lambda(C^\gamma f)}_{H^q}
 \lesssim \mathfrak C_s(\mu)\norm{f}_{H^p}.
\]
Thus $\mathcal C_{\mu,\gamma}:H^p\to H^q$ is bounded for every $\gamma>0$.
\end{proof}

\subsection{Sufficiency for \texorpdfstring{$0<p\leq q <1$}{1<p<q<infinity}}\hfill\\\indent

\begin{proof}[Sufficiency in Theorem \ref{thm:upper-improved} when $0<p\leq q <1$]
Let $\gamma>0$, put
\[
 K=\sup_{0\le t<1}\frac{\mu([t,1))}{(1-t)^s}<\infty,
\]
and fix $f\in H^p$. Define
\[
 h(z)=\frac{f(z)}{(1-z)^\gamma},
 \qquad
 a_k=1-2^{-k},\qquad I_k=[a_k,a_{k+1}),\quad k\ge0.
\]
The Carleson condition implies
\[
 \mu(I_k)\le\mu([a_k,1))\le K2^{-ks}.
\]

Fix $0<\rho<1$. Since $\mu$ is finite and $h$ is bounded on
$\{|z|\le\rho\}$, the following decomposition is absolutely
convergent:
\[
 \mathcal C_{\mu,\gamma}f(\rho e^{i\theta})
 =\sum_{k=0}^{\infty}\int_{I_k}h(\rho t e^{i\theta})\,d\mu(t).
\]
Set
\[
 H_{k,\rho}(\theta)
 =\sup_{0\le t\le a_{k+1}}|h(\rho t e^{i\theta})|.
\]
Then 
\begin{align*}
 |\mathcal C_{\mu,\gamma}f(\rho e^{i\theta})|^q
 &\le\sum_{k\ge0}
 \left|\int_{I_k}h(\rho t e^{i\theta})\,d\mu(t)\right|^q\\
 &\le K^q\sum_{k\ge0}2^{-ksq}H_{k,\rho}(\theta)^q.
\end{align*}
For each $k$, the function $z\mapsto h(\rho a_{k+1}z)$
belongs to $H^\infty$. The radial maximal inequality, applied to
this dilation, yields
\[
 \int_{-\pi}^{\pi}H_{k,\rho}(\theta)^q\,\frac{d\theta}{2\pi}
 \lesssim M_q(\rho a_{k+1},h)^q
 \le M_q(a_{k+1},h)^q.
\]
Thus Tonelli's theorem gives
\begin{equation}\label{eq:stempak-dyadic-bound}
 M_q(\rho,\mathcal C_{\mu,\gamma}f)^q
 \lesssim K^q\sum_{k\ge0}2^{-ksq}M_q(a_{k+1},h)^q.
\end{equation}

Since $s q>0$,
\[
 \int_{a_{k+1}}^{a_{k+2}}(1-t)^{sq-1}\,dt
 =\frac{2^{-sq}-2^{-2sq}}{sq}\,2^{-ksq}\asymp 2^{-ksq}.
\]
The monotonicity of $M_q(t,h)$ therefore implies
\[
 2^{-ksq}M_q(a_{k+1},h)^q
 \lesssim
 \int_{a_{k+1}}^{a_{k+2}}
       (1-t)^{sq-1}M_q(t,h)^q\,dt.
\]
Summing over the disjoint intervals and using
\eqref{eq:stempak-dyadic-bound}, we obtain
\begin{equation}\label{eq:stempak-weighted-bound}
 M_q(\rho,\mathcal C_{\mu,\gamma}f)^q
 \lesssim K^q
 \int_0^1(1-t)^{sq-1}M_q(t,h)^q\,dt.
\end{equation}

Choose $A>1$ sufficiently close to $1$ that
\[
 \max\{0,1-\gamma q\}<\frac1A<1,
\]
and let $B=A/(A-1)$ and $u=Aq$.(Note that the choice of $A$ is possible for every $\gamma>0$.) Then
\[
 \gamma qB>1,\qquad u>q\ge p.
\]
H\"older's inequality with exponents $A$ and $B$ gives
\begin{align*}
 M_q(t,h)^q
 &=\int_{-\pi}^{\pi}
   \frac{|f(te^{i\theta})|^q}{|1-te^{i\theta}|^{\gamma q}}
   \,\frac{d\theta}{2\pi}\\
 &\le M_u(t,f)^q
 \left(\int_{-\pi}^{\pi}
 |1-te^{i\theta}|^{-\gamma qB}\,
 \frac{d\theta}{2\pi}\right)^{1/B}\\
 &\lesssim 
 M_u(t,f)^q(1-t)^{-\gamma q+\frac{1}{B}}.
\end{align*}
The last step uses the elementary kernel estimate
\[
 \int_{-\pi}^{\pi}|1-te^{i\theta}|^{-\beta}
 \,\frac{d\theta}{2\pi}
 \lesssim (1-t)^{1-\beta},\qquad \beta>1,
\]
with $\beta=\gamma qB$.
Since $1/B=1-1/A$ and $q/u=1/A$, we have
\[
 sq-1-\gamma q+\frac1B
 =\frac qp-1-\frac1A
 =q\left(\frac1p-\frac1u\right)-1.
\]
Consequently, \eqref{eq:stempak-weighted-bound} becomes
\[
 M_q(\rho,\mathcal C_{\mu,\gamma}f)^q
 \lesssim K^q
 \int_0^1
 (1-t)^{q(1/p-1/u)-1}M_u(t,f)^q\,dt.
\]
The Hardy--Littlewood mixed-mean theorem states that, for
$0<p<u<\infty$ and $\lambda\ge p$,
\[
 \int_0^1
 (1-t)^{\lambda(1/p-1/u)-1}M_u(t,f)^\lambda\,dt
 \lesssim_{p,u,\lambda}\|f\|_{H^p}^\lambda;
\]
see \cite[Theorem~5.11]{Duren1970} and
\cite{GirelaMarquez1998}. We may apply it with $\lambda=q$
because $q\ge p$ and $u>q\ge p$. It follows that
\[
 M_q(\rho,\mathcal C_{\mu,\gamma}f)^q
 \lesssim K^q\|f\|_{H^p}^q.
\]
Taking the supremum over $0<\rho<1$  gives
\[
 \|\mathcal C_{\mu,\gamma}f\|_{H^q}
 \lesssim K\|f\|_{H^p}.
\]
This proves sufficiency throughout $0<p\le q<1$. 
\end{proof}

Together with Subsection \ref{subsec:upper-necessity}, the three
sufficiency arguments establish Theorem \ref{thm:upper-improved}
and its norm equivalence for all the stated indices.

\section{The lower triangle, including the \texorpdfstring{$H^\infty$}{H-infinity} endpoint}\label{sec:lower}

We prove Theorem~\ref{thm:lower-main} using the dyadic condition
\textup{(iii)} as the central criterion. The proof follows the route
\begin{gather*}
 \textup{(iii)}\Longleftrightarrow\textup{(iv)}
 \Longleftrightarrow\textup{(v)}\Longleftrightarrow\textup{(vi)}
 \Longleftrightarrow\textup{(vii)},\\
 \textup{(iii)}\Longrightarrow\textup{(ii)}
 \Longrightarrow\textup{(i)}\Longrightarrow\textup{(iii)}.
\end{gather*}
The first line concerns only the measure and the parameters $r,\gamma$.
Lemma~\ref{lem:potential} already treats \textup{(iii)--(v)};
Subsection~\ref{subsec:lower-measure-criteria} supplies the generating-function
and moment comparisons. The operator implications are then proved in
one argument in Subsection~\ref{subsec:lower-proof}. Only the necessity
of \textup{(iii)} requires separate treatments of finite $p$ and
$p=\infty$.

\subsection{Equivalent descriptions of the measure}\label{subsec:lower-measure-criteria}\hfill\\\indent

We first note a kernel estimate used below. For $0<\rho<1$,
$0\le t<1$, and $\theta\in\mathbb R$, since $t(1-\rho)\le1-\rho t\le|1-\rho te^{i\theta}|$, we obtain 
\begin{equation}\label{eq:radial-denominator}
 |1-te^{i\theta}|
 \le |1-\rho te^{i\theta}|+t(1-\rho)
 \le 2|1-\rho te^{i\theta}|,
\end{equation}

\begin{lemma}\label{lem:generating-tail}
Let $0< r<\infty$, $\gamma>0$, and let $\mu$ be a finite positive
Borel measure on $[0,1)$. Put
\[
 a_j=1-2^{-j},\qquad m_j=\mu([a_j,1)),\qquad
 d_j=2^{j(\gamma-1/r)}m_j.
\]
Then $F_{\mu,\gamma}\in H^r$ if and only if $d\in\ell^r$, and,
whenever these conditions hold,
\begin{equation*}\label{eq:general-F-potential}
 \|F_{\mu,\gamma}\|_{H^r}\asymp\|d\|_{\ell^r}.
\end{equation*}
\end{lemma}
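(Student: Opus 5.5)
The plan is to compare $F_{\mu,\gamma}$ with the boundary potential $P_{\mu,\gamma}$ of Lemma~\ref{lem:potential}. That lemma already gives $\|P_{\mu,\gamma}\|_{L^r(\T)}^r\asymp\sum_j d_j^r$. It therefore suffices to prove
\[
 \sup_{0<\rho<1}M_r(\rho,F_{\mu,\gamma})\asymp\|P_{\mu,\gamma}\|_{L^r(\T)},
\]
with the understanding that either side may be infinite.

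\emph{Upper bound.} For $0<\rho<1$ we have the pointwise estimate
\[
 |F_{\mu,\gamma}(\rho e^{i\theta})|\le\int_{[0,1)}|1-\rho te^{i\theta}|^{-\gamma}\,d\mu(t).
\]
By \eqref{eq:radial-denominator}, $|1-\rho te^{i\theta}|\ge\frac12|1-te^{i\theta}|$, and hence
\[
 |F_{\mu,\gamma}(\rho e^{i\theta})|\le2^\gamma P_{\mu,\gamma}(e^{i\theta}).
\]
Taking $L^r$ means and the supremum over $\rho$ gives $\|F_{\mu,\gamma}\|_{H^r}\lesssim\|P_{\mu,\gamma}\|_{L^r}\asymp\|d\|_{\ell^r}$. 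This direction is immediate for every $r>0$.

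\emph{Lower bound.} Here the difficulty is that $|F|$ may be much smaller than $P$ because of cancellation in the argument of $(1-te^{i\theta})^{-\gamma}$. The plan is to work on the dyadic arcs $E_k=\{2^{-k-1}<|\theta|\le2^{-k}\}$ at the radius $\rho=a_k$, or more simply at radii $\rho\to1$, and to exploit positivity of $\mu$ on a suitable part of the measure.

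Fix $k$ and $\theta\in E_k$, with $\rho$ close to $1$ (say $1-\rho\le2^{-k}$). For $t\in[a_k,1)$, the number $1-\rho te^{i\theta}$ has modulus $\asymp2^{-k}$, by \eqref{eq:kernel-geometry} applied with $\rho t$. Its argument lies in $(-\pi/2,\pi/2)$, since the real part $1-\rho t\cos\theta$ is positive. Hence the argument of $(1-\rho te^{i\theta})^{-\gamma}$ lies in $(-\gamma\pi/2,\gamma\pi/2)$. This alone does not exclude cancellation when $\gamma\ge1$, so I will instead argue as follows. I expect this to be the main obstacle.

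The first attempt avoids pointwise control. Since $r$ may be below $1$, use the subharmonicity of $|F|^r$ together with a Poisson/nontangential evaluation: $|F(z)|\lesssim\|F\|_{H^r}(1-|z|)^{-1/r}$. Then estimate $|F_{\mu,\gamma}(\rho)|$ on the positive radius, where there is no cancellation. For real $x\in[0,1)$, the integrand $(1-tx)^{-\gamma}$ is positive, and for $x=a_k$ one gets
\[
 F_{\mu,\gamma}(a_k)\gtrsim\sum_{j\le k}2^{j\gamma}\mu([a_j,a_{j+1}))+2^{k\gamma}m_k\ge2^{k\gamma}m_k .
\]
Lemma~\ref{lem:sampling} applied with $g=F_{\mu,\gamma}$ and $q=r$ then gives
\[
 \sum_k2^{-k}F_{\mu,\gamma}(a_k)^r\lesssim\|F_{\mu,\gamma}\|_{H^r}^r .
\]
Since $2^{-k}\bigl(2^{k\gamma}m_k\bigr)^r=d_k^r$, this yields $\|d\|_{\ell^r}^r\lesssim\|F_{\mu,\gamma}\|_{H^r}^r$. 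This radial sampling argument sidesteps the cancellation issue entirely, so it is the route I will take; the constants depend only on $r$ and $\gamma$.

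Finally, note the equivalence statement itself. If $d\in\ell^r$, the upper bound gives $F_{\mu,\gamma}\in H^r$. Conversely, if $F_{\mu,\gamma}\in H^r$, the sampling bound gives $d\in\ell^r$. Both inequalities are valid in the extended sense, since all quantities involved are nonnegative.
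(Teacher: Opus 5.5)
Your proof is correct and follows the paper's argument. The upper bound is the same pointwise domination $|F_{\mu,\gamma}(\rho e^{i\theta})|\le 2^\gamma P_{\mu,\gamma}(e^{i\theta})$ from \eqref{eq:radial-denominator}, combined with Lemma~\ref{lem:potential}. The lower bound is the same positive radial estimate $F_{\mu,\gamma}(a_j)\ge 2^{-\gamma}2^{j\gamma}m_j$, which follows from $1-ta_j\le 1-a_j^2\le 2(1-a_j)$ for $t\in[a_j,1)$, fed into Lemma~\ref{lem:sampling} with exponent $r$. The exploratory discussion of cancellation on the arcs $E_k$ and the pointwise growth bound are not needed and can be dropped.
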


\begin{proof}
If $d\in\ell^r$, Lemma~\ref{lem:potential} gives
$P_{\mu,\gamma}\in L^r(\T)$. By \eqref{eq:radial-denominator},
\[
 |F_{\mu,\gamma}(\rho e^{i\theta})|
 \le\int_{[0,1)}\frac{d\mu(t)}{|1-\rho te^{i\theta}|^\gamma}
 \le2^\gamma P_{\mu,\gamma}(e^{i\theta}).
\]
Taking integral means and then the supremum over $\rho$ yields
\[
 \|F_{\mu,\gamma}\|_{H^r}
 \le2^\gamma\|P_{\mu,\gamma}\|_{L^r(\T)}
 \lesssim\|d\|_{\ell^r}.
\]
Conversely, suppose $F_{\mu,\gamma}\in H^r$.
For $t\in[a_j,1)$, it obvious that
$1-ta_j\le1-a_j^2\le2(1-a_j)$. This gives
\[
 F_{\mu,\gamma}(a_j)
 \ge\int_{[a_j,1)}\frac{d\mu(t)}{(1-ta_j)^\gamma}
 \ge2^{-\gamma}2^{j\gamma}m_j.
\]
Applying Lemma~\ref{lem:sampling} with exponent $r>0$, we obtain
\[
 \sum_{j\ge0}d_j^r
 \le2^{\gamma r}\sum_{j\ge0}(1-a_j)
       |F_{\mu,\gamma}(a_j)|^r
 \lesssim\|F_{\mu,\gamma}\|_{H^r}^r.
\]
The sum includes $j=0$, so the total mass $m_0=\mu([0,1))$
is also controlled. This proves the assertion.
\end{proof}

\begin{lemma}\label{lem:moment-tail}
For $0<r<\infty$, $\gamma>0$, and a finite positive measure $\mu$ on
$[0,1)$, put $\sigma=\gamma-1/r$ and use $a_j,m_j,d_j$ as above. Then
\begin{equation}\label{eq:moment-tail-equivalence}
 \sum_{n=0}^\infty(n+1)^{r\gamma-2}\mu_n^r
 \asymp
 \sum_{j=0}^\infty d_j^r.
\end{equation}
\end{lemma}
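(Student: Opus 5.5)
We compare both sides through dyadic blocks: block the moment index, and bound the moments $\mu_n$ above and below by the tails $m_j$.

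\emph{Lower bound on $\mu_n$.} For $2^j\le n+1<2^{j+1}$ we have $\mu_n\ge a_j^n m_j$. Since $a_j^n=(1-2^{-j})^n\ge(1-2^{-j})^{2^{j+1}}\gtrsim 1$ for $j\ge1$ (and trivially for $j=0$), this gives $\mu_n\gtrsim m_j$. Summing over the block, which contains $2^j$ indices, each weighted by $(n+1)^{r\gamma-2}\asymp 2^{j(r\gamma-2)}$, gives a contribution $\gtrsim 2^{j(r\gamma-1)}m_j^r=d_j^r$, because $r\sigma=r\gamma-1$. Summing over $j$ yields $\sum_j d_j^r\lesssim\sum_n(n+1)^{r\gamma-2}\mu_n^r$.

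\emph{Upper bound on $\mu_n$.} For $n$ in block $j$, split $[0,1)$ into the intervals $[a_i,a_{i+1})$ for $i<j$ together with $[a_j,1)$. On $[a_i,a_{i+1})$ we have $t^n\le a_{i+1}^n\le e^{-n2^{-i-1}}\le e^{-2^{j-i-2}}$. Hence
\[
 \mu_n\le m_j+\sum_{i<j}e^{-c2^{j-i}}m_i .
\]
In the normalized quantities, $2^{j\sigma}\mu_n\lesssim d_j+\sum_{i<j}2^{(j-i)\sigma}e^{-c2^{j-i}}d_i=(b*d)_j$, with kernel $b_\ell=2^{\ell\sigma}e^{-c2^\ell}$ for $\ell\ge0$ and $b_\ell=0$ for $\ell<0$. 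Note that $\sigma$ may be negative, but that is harmless. Because the decay is super-exponential, $b\in\ell^1\cap\ell^u$ for every $u>0$, whatever the sign of $\sigma$.

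The block-$j$ contribution to the moment sum is then $\asymp 2^{j(r\gamma-1)}\max_{n\in\text{block}}\mu_n^r\lesssim (b*d)_j^r$. Applying Lemma~\ref{lem:discrete-young}(a) when $r\ge1$, and part (b) when $r<1$, gives $\sum_n(n+1)^{r\gamma-2}\mu_n^r\lesssim\|d\|_{\ell^r}^r$.

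The only delicate point is the exponent bookkeeping in the block weights, namely $2^j\cdot 2^{j(r\gamma-2)}=2^{jr\sigma}$. The weight $(n+1)^{r\gamma-2}$ may be increasing or decreasing in $n$, but it is comparable to $2^{j(r\gamma-2)}$ within a block either way. Beyond that, the argument only needs the kernel to be summable in $\ell^{\min(r,1)}$ uniformly in the sign of $\sigma$, and super-exponential decay guarantees this. All constants depend only on $r$ and $\gamma$.
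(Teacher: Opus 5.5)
Your proof is correct. Its skeleton matches the paper's: dyadic blocking of the moment index, the lower bound $\mu_n\gtrsim m_j$ on block $j$, and an upper bound by a discrete convolution controlled with Lemma~\ref{lem:discrete-young}.

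The genuine difference is in the upper bound. The paper works with the representatives $\mu_{2^j}$ and uses the layer-cake formula $\mu_{2^j}=2^j\int_0^1 t^{2^j-1}\mu([t,1))\,dt$. It estimates over all dyadic intervals $[a_k,a_{k+1})$, including those with $k>j$. This produces the two-sided kernel $K_\ell=2^{\ell(1+\sigma)}e^{-2^\ell/4}$, $\ell\in\mathbb Z$. As $\ell\to-\infty$ this kernel decays only like $2^{\ell(1+\sigma)}$, so it is summable only when $1+\sigma>0$. That is why the paper splits off the case $\sigma<0$ and handles it separately, by noting that both sides are then comparable to $\mu_0^r$.

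You instead integrate $t^n$ against $\mu$ directly and bound the whole piece $[a_j,1)$ by $m_j$. This gives a one-sided kernel $b_\ell=2^{\ell\sigma}e^{-c2^\ell}$, $\ell\ge0$, which lies in every $\ell^u$ whatever the sign of $\sigma$. So your argument is uniform in $\sigma$ and needs no case distinction. The paper's layer-cake computation is more standard, but it wastes information on $[a_j,1)$, and that waste is exactly what forces its case split.

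One small imprecision: the block-$j$ contribution is only $\lesssim$, not $\asymp$, $2^{j(r\gamma-1)}\max_{\text{block}}\mu_n^r$. You only use that inequality, so the argument is unaffected.
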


\begin{proof}
\emph{Case $\sigma<0$.} In this case, $r\gamma<1$. Since $m_j \leq =m_0=\mu_0$ and $\mu_n\le\mu_0$ for all $n\geq 0$. This implies that 
\[
 m_0^r\le\sum_j2^{jr\sigma}m_j^r
 \le\frac{m_0^r}{1-2^{r\sigma}},
\]
and 
\[
 \mu_0^r\le\sum_n(n+1)^{r\gamma-2}\mu_n^r
 \le\mu_0^r\sum_n(n+1)^{r\gamma-2}<\infty.
\]
Thus both expressions are comparable to $\mu_0^r$, proving the
claim in this case.

\emph{Case $\sigma\geq 0$.}
For $j\ge0$ and $2^j\le n<2^{j+1}$,
$\mu_{2^{j+1}}\le\mu_n\le\mu_{2^j}$. Hence
\[
 2^{j(r\gamma-1)}\mu_{2^{j+1}}^r
 \lesssim
 \sum_{n=2^j}^{2^{j+1}-1}n^{r\gamma-2}\mu_n^r
 \lesssim
 2^{j(r\gamma-1)}\mu_{2^j}^r.
\]
This gives
\begin{equation}\label{eq:moment-block}
 \sum_{n=0}^\infty(n+1)^{r\gamma-2}\mu_n^r
 \asymp
 \mu_0^r+\sum_{j\ge1}2^{jr\sigma}\mu_{2^j}^r.
\end{equation}
For $j\ge1$,
\[
 \mu_{2^j}\ge(1-2^{-j})^{2^j}m_j\ge\tfrac14m_j,
\]
while $d_0=m_0=\mu_0$. Consequently,
\begin{equation}\label{eq:tail-to-moment}
 \sum_{j\ge0}d_j^r
 \lesssim \mu_0^r+\sum_{j\ge1}2^{jr\sigma}\mu_{2^j}^r.
\end{equation}

For the reverse estimate, we assume that $d\in\ell^r$. For $n\ge 1$, it is clear that
\[
 \mu_n=n\int_0^1t^{n-1} \mu([t,1))\,dt,\qquad n\ge1.
\]
Fix $j\ge1$. If $t\in[a_k,a_{k+1})$, then
\[
 1-t>2^{-k-1},\qquad \mu([t,1))\le m_k,\qquad
 a_{k+1}-a_k=2^{-k-1}.
\]
Since $t\le e^{-(1-t)}$ and $2^j-1\ge2^{j-1}$,
\[
 t^{2^j-1}
 \le\exp\bigl(-(2^j-1)(1-t)\bigr)
 \le\exp\left(-\tfrac14\,2^{j-k}\right).
\]
Thus
\begin{align*}
 \mu_{2^j}
 &=2^j\sum_{k\ge0}\int_{a_k}^{a_{k+1}}
       t^{2^j-1} \mu([t,1))\,dt\\
 &\le\frac12\sum_{k\ge0}
       2^{j-k}\exp\left(-\tfrac14\,2^{j-k}\right)m_k.
\end{align*}
Extend $d$ by zero to the negative integers. Multiplication by
$2^{j\sigma}$ yields
\[
 2^{j\sigma}\mu_{2^j}\le\tfrac12(K*d)_j,
 \qquad
 K_\ell=2^{\ell(1+\sigma)}e^{-2^\ell/4},\quad\ell\in\mathbb Z.
\]
 It is easy to  check that  $K\in\ell^u(\mathbb Z)$ for
every $u>0$. If $r\ge1$, apply Lemma~\ref{lem:discrete-young}\textup{(a)}.
If $r<1$, apply part \textup{(b)}. In both cases,
\[
 \sum_{j\ge1}2^{jr\sigma}\mu_{2^j}^r
 \le2^{-r}\|K*d\|_{\ell^r(\mathbb Z)}^r
\lesssim\|d\|_{\ell^r}^r.
\]
Together with $\mu_0=d_0$, \eqref{eq:moment-block}, and
\eqref{eq:tail-to-moment}, this proves
\eqref{eq:moment-tail-equivalence}. 
\end{proof}

\subsection{The operator implications}\label{subsec:lower-proof}\hfill\\\indent

\begin{proof}[Proof of Theorem~\ref{thm:lower-main}]
Fix $0<q<p\le\infty$ and $\gamma>0$, and use $r,\sigma,a_j,m_j,d_j$
as in the theorem. 
Lemma~\ref{lem:potential} gives \textup{(iii)}$\Leftrightarrow$
\textup{(iv)}$\Leftrightarrow$\textup{(v)}.
Lemma~\ref{lem:generating-tail} and Lemma~\ref{lem:moment-tail}
show that  \textup{(iii)}$\Leftrightarrow$ \textup{(vi)} $\Leftrightarrow$ \textup{(vii)} with the corresponding norm comparisons. Since  \textup{(ii)}$\Rightarrow$ \textup{(i)} is obvious, it remains to prove that \textup{(iii)}$\Rightarrow$ \textup{(ii)} and  \textup{(i)}$\Rightarrow$ \textup{(iii)}.

\smallskip
\noindent\emph{ \textup{(iii)} $\Rightarrow$ \textup{(ii)}.}
Assume $d\in\ell^r$. Then $P_{\mu,\gamma}\in L^r(\T)$ by
Lemma~\ref{lem:potential}. For $f\in H^p$, let
\[
 f^*(e^{i\theta})=\sup_{0\le u<1}|f(ue^{i\theta})|.
\]
The radial maximal estimate gives
$\|f^*\|_{L^p(\T)}\lesssim \|f\|_{H^p}$ when $0<p<\infty$;
for $p=\infty$ the same bound holds with constant $1$.
By \eqref{eq:radial-denominator}, for $0<\rho<1$,
\[
 |\mathcal C_{\mu,\gamma}f(\rho e^{i\theta})|
 \le2^\gamma f^*(e^{i\theta})P_{\mu,\gamma}(e^{i\theta})
 \quad\text{for almost every }\theta.
\]
Since $1/q=1/p+1/r$, by H\"older's inequality with exponents  and the maximal estimate we have
\begin{equation}\label{eq:lower-operator-upper}
 \|\mathcal C_{\mu,\gamma}\|_{H^p\to H^q}
 \lesssim\|P_{\mu,\gamma}\|_{L^r(\T)}
 \lesssim\|d\|_{\ell^r}.
\end{equation}
This includes $p=\infty$, where $r=q$. 

To obtain compactness, fix $0<R<1$ and write
\[
 \mu_R=\mu|_{[0,R]},\qquad
 \widetilde\mu_R=\mu|_{(R,1)}.
\]
The operator $\mathcal C_{\mu_R,\gamma}:H^p\to H^q$ is compact.
Indeed,  let $\{f_n\}$ be a bounded sequence in $H^p$ that converges to $0$ uniformly on
compact subsets of $\D$. Then 
\[
 \norm{\mathcal C_{\mu_R,\gamma}f_n}_{H^q}
 \le\norm{\mathcal C_{\mu_R,\gamma}f_n}_{H^\infty}
 \le\frac{\mu([0,R])}{(1-R)^\gamma}
 \sup_{|w|\le R}|f_n(w)|\longrightarrow0.
\]
Hence the truncated operator is compact.

For the  measure $\widetilde\mu_R$,
\[
 0\le P_{\widetilde\mu_R,\gamma}\le P_{\mu,\gamma}\in L^r(\T).
\]
For each nonzero $\theta\in[-\pi,\pi]$, the kernel in the potential
is uniformly bounded in $t$ by \eqref{eq:kernel-geometry}.
Since $\mu((R,1))\to0$, it follows that
$P_{\widetilde\mu_R,\gamma}(e^{i\theta})\to0$ as $R\to1^-$.
By the dominated convergence theorem together with $r<\infty$, an application of
\eqref{eq:lower-operator-upper} to $\widetilde\mu_R$ yields
\begin{equation*}\label{eq:lower-compact-approximation}
 \begin{aligned}
 \|\mathcal C_{\mu,\gamma}-\mathcal C_{\mu_R,\gamma}\|_{H^p\to H^q}
 =\|\mathcal C_{\widetilde\mu_R,\gamma}\|_{H^p\to H^q}
 \lesssim \|P_{\widetilde\mu_R,\gamma}\|_{L^r(\T)}\longrightarrow0.
 \end{aligned}
\end{equation*}
Thus $\mathcal C_{\mu,\gamma}$ is a norm limit of compact operators,
which proves \textup{(ii)}.

\smallskip
\noindent\emph{ \textup{(i)}$\Rightarrow$ \textup{(iii)}.}
 Assume $ N:=\|\mathcal C_{\mu,\gamma}\|_{H^p\to H^q}<\infty.$
If $p=\infty$, then $r=q$. Testing with the constant function $1$ gives
\[
 \|F_{\mu,\gamma}\|_{H^r}
 =\|\mathcal C_{\mu,\gamma}(1)\|_{H^q}\le N.
\]
Lemma~\ref{lem:generating-tail} therefore yields
$\|d\|_{\ell^r}\lesssim N$.

Suppose now that $p<\infty$. Let $c=\{c_j\}_{j\ge0}\in\ell^p$
be nonnegative and set
\[
 f=\sum_{j\ge0}c_jk_j,
\]
with $k_j$ as in \eqref{eq:kj}. Lemma~\ref{lem:synthesis} gives
convergence in $H^p$ and locally uniform absolute convergence, as well as
\begin{equation}\label{eq:f-synthesis}
 \|f\|_{H^p}\lesssim\|c\|_{\ell^p}.
\end{equation}
For fixed $j$,  $f(ta_j)\ge c_jk_j(ta_j)$.
If $t\in[a_j,1)$, then
\[
 1-a_j^2\ge1-a_j,\qquad
 1-ta_j^2\le3(1-a_j),\qquad
 1-ta_j\le2(1-a_j).
\]
Consequently,
\begin{equation}\label{eq:point-lower}
 \begin{aligned}
 \mathcal C_{\mu,\gamma}f(a_j)
 &\ge c_j\int_{[a_j,1)}
       \frac{k_j(ta_j)}{(1-ta_j)^\gamma}\,d\mu(t)\\
 &\gtrsim
       c_j\frac{m_j}{(1-a_j)^{\gamma+1/p}}.
 \end{aligned}
\end{equation}
As $d_j=m_j/(1-a_j)^{\gamma+1/p-1/q}$,
Lemma~\ref{lem:sampling}, \eqref{eq:point-lower} and
\eqref{eq:f-synthesis} give
\begin{align*}
 \|\{d_jc_j\}\|_{\ell^q}
 &\lesssim
   \left(\sum_{j\ge0}(1-a_j)
   |\mathcal C_{\mu,\gamma}f(a_j)|^q\right)^{1/q}\\
 &\lesssim \|\mathcal C_{\mu,\gamma}f\|_{H^q}
 \lesssim N\|c\|_{\ell^p}.
\end{align*}
For an arbitrary complex sequence $x\in\ell^p$, apply this inequality
to $c_j=|x_j|$; since $d_j\ge0$, it gives the same bound for
$\{d_jx_j\}$. Lemma~\ref{lem:diagonal} now implies $d\in\ell^r$.
In both cases we have proved
\begin{equation}\label{eq:lower-operator-lower}
 \|d\|_{\ell^r}
 \lesssim\|\mathcal C_{\mu,\gamma}\|_{H^p\to H^q}.
\end{equation}

Estimates \eqref{eq:lower-operator-upper} and
\eqref{eq:lower-operator-lower} show that
\[
 \|\mathcal C_{\mu,\gamma}\|_{H^p\to H^q}
 \asymp\|d\|_{\ell^r}.
\]
Combining this with Lemmas~\ref{lem:potential},
\ref{lem:generating-tail}, and~\ref{lem:moment-tail} gives exactly
\eqref{eq:norm-equivalence-lower}. The total mass term in the
continuous expression is retained through
\eqref{eq:integral-dyadic-equivalence}. This completes all seven
equivalences and the quantitative assertion.
\end{proof}

\begin{corollary}\label{cor:gamma-threshold}
Let $0<q<\infty$ and $\mu$ be a finite positive Borel measure on $[0,1)$.
\begin{enumerate}[label=\textup{(\roman*)}]
 \item If $0<\gamma<1/q$, then $\mathcal C_{\mu,\gamma}:H^\infty\to H^q$ is compact for every finite $\mu$.
 \item If $\gamma=1/q$, boundedness and compactness are equivalent to
 \[
 \int_0^1\frac{\mu([t,1))^q}{1-t}\,dt<\infty,
 \quad\text{equivalently}\quad
 \sum_{n\ge0}\frac{\mu_n^q}{n+1}<\infty.
\]
 \item If $\gamma>1/q$, boundedness is characterized by the tail
condition \eqref{eq:Hinfty-tail} and implies compactness.
\end{enumerate}
\end{corollary}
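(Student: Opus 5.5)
This corollary is the case $p=\infty$ of Theorem~\ref{thm:lower-main}, where $r=q$ and $\sigma=\gamma-1/q$. The plan is to apply that theorem with $p=\infty$ and then read off the dyadic condition \textup{(iii)}, namely $\{d_j\}=\{2^{j\sigma}m_j\}\in\ell^q$, separately according to the sign of $\sigma$. In every case the theorem already gives the equivalence of boundedness and compactness. So the only work is to decide when $\{d_j\}\in\ell^q$ holds, and to rewrite this via \eqref{eq:tail-integral-main} and the moment condition \textup{(vii)}.

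\emph{Case (i), $\gamma<1/q$.} Here $\sigma<0$ and $m_j\le m_0=\mu([0,1))<\infty$. Hence
\[
\sum_j d_j^q\le m_0^q\sum_j 2^{jq\sigma}=\frac{m_0^q}{1-2^{q\sigma}}<\infty
\]
for every finite $\mu$, exactly as in the first case of the proof of Lemma~\ref{lem:moment-tail}. Theorem~\ref{thm:lower-main} \textup{(iii)}$\Rightarrow$\textup{(ii)} then gives compactness. The theorem's norm comparison also yields the quantitative bound $\|\mathcal C_{\mu,\gamma}\|_{H^\infty\to H^q}\lesssim\mu([0,1))$.

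\emph{Case (ii), $\gamma=1/q$.} Here $\sigma=0$ and $r\gamma-2=-1$. Condition \eqref{eq:tail-integral-main} becomes $\int_0^1\mu([t,1))^q\,dt/(1-t)<\infty$, and \textup{(vii)} becomes $\sum_n\mu_n^q/(n+1)<\infty$. By Theorem~\ref{thm:lower-main}, each of these is equivalent to boundedness and to compactness. One point needs care: the equivalence \textup{(iii)}$\Leftrightarrow$\textup{(iv)} in Lemma~\ref{lem:potential} carries the extra term $\mu([0,1))^q$. That term is finite automatically, so finiteness of the integral alone is still the right condition.

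\emph{Case (iii), $\gamma>1/q$.} Condition \eqref{eq:tail-integral-main} with $r=q$ is precisely \eqref{eq:Hinfty-tail}, so the claim is the theorem restated. The only possible obstacle in the whole argument is bookkeeping: checking that the $p=\infty$ necessity argument (testing with the constant $1$ and applying Lemma~\ref{lem:generating-tail}) is valid for every $0<q<\infty$. It is, because Lemma~\ref{lem:generating-tail} and Lemma~\ref{lem:sampling} hold for every $r>0$, so nothing new is required.
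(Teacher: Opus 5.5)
Your proposal is correct and follows the paper's proof: both specialize Theorem~\ref{thm:lower-main} to $p=\infty$, $r=q$, note that for $q\gamma<1$ the criterion holds for every finite $\mu$, and read off cases (ii) and (iii) directly from the theorem. The only difference is cosmetic: you verify the dyadic condition \textup{(iii)} via $m_j\le m_0$ and a geometric series, whereas the paper verifies \eqref{eq:Hinfty-tail} directly using $\mu([t,1))\le\mu([0,1))$ and $\int_0^1(1-t)^{-q\gamma}\,dt<\infty$.
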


\begin{proof}
If $q\gamma<1$, then
\[
 \int_0^1\frac{\mu([t,1))^q}{(1-t)^{q\gamma}}\,dt
 \le\mu([0,1))^q\int_0^1(1-t)^{-q\gamma}\,dt<\infty.
\]
The remaining statements are immediate from Theorem~\ref{thm:lower-main}.
\end{proof}

\section{The target \texorpdfstring{$H^\infty$}{H-infinity}}\label{sec:target-infty}

The measure conditions in Theorem~\ref{thm:target-infty} follow
from the comparison lemmas in the preceding section.
With $p',s,b$ and $G_{\mu,\gamma}$ as in the theorem, observe that
\begin{equation*}\label{eq:target-exponent-shift}
 (\gamma+1)-\frac1{p'}=\gamma+\frac1p=s,
 \qquad p's+1=p'(\gamma+1).
\end{equation*}
Apply Lemmas~\ref{lem:potential}, \ref{lem:generating-tail}, and
\ref{lem:moment-tail} with the exponent and kernel parameter
$(r,\gamma)$ replaced by $(p',\gamma+1)$.
They give the equivalence of \textup{(iii)--(vi)} in
Theorem~\ref{thm:target-infty}, together with all comparisons
in \eqref{eq:target-norm} that do not involve the operator norm.
It remains only to prove
\[
 \textup{(v)}\Longrightarrow\textup{(ii)}
 \Longrightarrow\textup{(i)}\Longrightarrow\textup{(iii)}
\]
and the  operator norm estimates.

\begin{proof}[Proof of Theorem~\ref{thm:target-infty}]
\noindent\emph{ \textup{(v)}$\Longrightarrow$\textup{(ii)}.}
Assume $F_{\mu,\gamma+1}\in H^{p'}$.
For $h\in H^p$ and $G\in H^{p'}$, the Hadamard product satisfies
\begin{equation}\label{eq:hadamard-to-infty}
 \|h\star G\|_{H^\infty}\le\|h\|_{H^p}\|G\|_{H^{p'}}.
\end{equation}

By the normalization of $R^\gamma$, 
$F_{\mu,\gamma+1}=R^\gamma F_\mu$.
Thus the factorization \eqref{eq:C-factorization} and \eqref{eq:hadamard-to-infty} give
\begin{equation}\label{eq:target-upper}
 \|\mathcal C_{\mu,\gamma}\|_{H^p\to H^\infty}
 \le\|C^\gamma\|_{H^p\to H^p}
       \|F_{\mu,\gamma+1}\|_{H^{p'}}
 \lesssim\|F_{\mu,\gamma+1}\|_{H^{p'}}.
\end{equation}
Since $p'<\infty$, polynomials are dense in $H^{p'}$; see
\cite{Duren1970}. Choose polynomials $Q_N$ such that
$\|F_{\mu,\gamma+1}-Q_N\|_{H^{p'}}\to0$, and define
\[
 T_Nf=Q_N\star C^\gamma f.
\]
Each $T_N$ has finite-dimensional range, since its range is contained in the space of polynomials of degree at most $\deg Q_N$.
It follows from the  estimate \eqref{eq:hadamard-to-infty}  that
\begin{equation*}\label{eq:target-finite-rank}
 \|\mathcal C_{\mu,\gamma}-T_N\|_{H^p\to H^\infty}
 \le\|C^\gamma\|_{H^p\to H^p}
       \|F_{\mu,\gamma+1}-Q_N\|_{H^{p'}}\longrightarrow0.
\end{equation*}
Hence $\mathcal C_{\mu,\gamma}$ is compact. This proves
\textup{(v)}$\Rightarrow$\textup{(ii)}.
It remains to prove \textup{(i)} $ \Longrightarrow$\textup{(iii)}.

\smallskip
\noindent\emph{\textup{(i)} $ \Longrightarrow$\textup{(iii)}.}
Suppose
\[
 \mathcal N:=\|\mathcal C_{\mu,\gamma}\|_{H^p\to H^\infty}<\infty.
\]
Let $c=\{c_j\}_{j\ge0}$ be a finitely supported nonnegative
sequence, and set $f=\sum_jc_jk_j$, with $k_j$ from
\eqref{eq:kj}. Lemma~\ref{lem:synthesis} gives
$\|f\|_{H^p}\lesssim\|c\|_{\ell^p}$.
For $0<x<1$, the integrand defining
$\mathcal C_{\mu,\gamma}f(x)$ is nonnegative. Letting $x\to1^-$
and applying Fatou's lemma yields
\begin{equation}\label{eq:target-boundary-functional}
 \int_{[0,1)}\frac{f(t)}{(1-t)^\gamma}\,d\mu(t)
 \le\liminf_{x\to1^-}\mathcal C_{\mu,\gamma}f(x)
 \le\mathcal N\|f\|_{H^p}
 \lesssim_p\mathcal N\|c\|_{\ell^p}.
\end{equation}
On the other hand, 
\begin{align*}
 \int_{[0,1)}\frac{f(t)}{(1-t)^\gamma}\,d\mu(t)
 &\ge\sum_jc_j\int_{[a_j,1)}
       \frac{k_j(t)}{(1-t)^\gamma}\,d\mu(t)\\
 &\ge2^{-1/p}\sum_jc_j
       \frac{\mu([a_j,1))}{(1-a_j)^{\gamma+1/p}}
 =2^{-1/p}\sum_jb_jc_j.
\end{align*}
Together with \eqref{eq:target-boundary-functional}, this gives
\[
 \sum_jb_jc_j\lesssim_p\mathcal N\|c\|_{\ell^p}
\]
for every finitely supported nonnegative sequence $c$.
Replacing $c_j$ by $|x_j|$ and passing to finite truncations
shows that $D_b:x\mapsto\{b_jx_j\}$ is bounded from $\ell^p$
to $\ell^1$.  By Lemma~\ref{lem:diagonal}   we obtain
\begin{equation}\label{eq:target-lower}
 b\in\ell^{p'},\qquad
 \|b\|_{\ell^{p'}}\lesssim
 \|\mathcal C_{\mu,\gamma}\|_{H^p\to H^\infty}.
\end{equation}
This proves \textup{(i)}$\Rightarrow$\textup{(iii)}.
Finally, combine \eqref{eq:target-upper} and
\eqref{eq:target-lower} with the measure comparisons preceding
the proof to obtain \eqref{eq:target-norm}.
The total mass term is inherited from
\eqref{eq:integral-dyadic-equivalence}, so an atom at $0$ is
included. All six assertions and the norm comparisons follow.
\end{proof}

\begin{corollary}\label{cor:target-shift}
Let $1<p<\infty$, $\gamma>0$, and let $\mu$ be a finite positive
Borel measure on $[0,1)$. Then
\[
 \mathcal C_{\mu,\gamma}:H^p\to H^\infty\ \text{is bounded}
 \quad\Longleftrightarrow\quad
 \mathcal C_{\mu,\gamma+1}:H^p\to H^1\ \text{is bounded}.
\]
In either case both operators are compact, and
\begin{equation*}\label{eq:target-shift-norm}
 \|\mathcal C_{\mu,\gamma}\|_{H^p\to H^\infty}
 \asymp
 \|\mathcal C_{\mu,\gamma+1}\|_{H^p\to H^1}.
\end{equation*}
\end{corollary}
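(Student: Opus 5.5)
The plan is to reduce both sides to a single measure condition and then invoke the two characterizations already proved. By Theorem~\ref{thm:target-infty}, boundedness of $\mathcal C_{\mu,\gamma}:H^p\to H^\infty$ is equivalent to $F_{\mu,\gamma+1}\in H^{p'}$. It implies compactness, and
\[
 \|\mathcal C_{\mu,\gamma}\|_{H^p\to H^\infty}\asymp\|F_{\mu,\gamma+1}\|_{H^{p'}}.
\]
On the other side, we apply Theorem~\ref{thm:lower-main} with source exponent $p$, target exponent $q=1$, and kernel order $\gamma+1$ in place of $\gamma$. This is admissible because $1<p<\infty$ gives $0<q=1<p$.

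Next we check the exponents. The index in Theorem~\ref{thm:lower-main} is determined by
\[
 \frac1r=1-\frac1p=\frac1{p'},
\]
so $r=p'$. The theorem then says that $\mathcal C_{\mu,\gamma+1}:H^p\to H^1$ is bounded if and only if it is compact, if and only if $F_{\mu,\gamma+1}\in H^{r}=H^{p'}$. It also gives
\[
 \|\mathcal C_{\mu,\gamma+1}\|_{H^p\to H^1}\asymp\|F_{\mu,\gamma+1}\|_{H^{p'}},
\]
with constants depending only on $p$ and $\gamma$. As a consistency check, the dyadic sequence in Theorem~\ref{thm:lower-main} becomes
\[
 d_j=2^{j(\gamma+1-1/p')}m_j=2^{j(\gamma+1/p)}m_j=b_j,
\]
which is exactly the sequence $b$ of Theorem~\ref{thm:target-infty}. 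So the two theorems are phrased through the same condition $b\in\ell^{p'}$.

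Chaining the two equivalences through the common condition $F_{\mu,\gamma+1}\in H^{p'}$ gives the stated equivalence of boundedness. Compactness of both operators follows from the "bounded implies compact" parts of each theorem. Combining the two norm comparisons yields
\[
 \|\mathcal C_{\mu,\gamma}\|_{H^p\to H^\infty}\asymp\|\mathcal C_{\mu,\gamma+1}\|_{H^p\to H^1}.
\]

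There is no real obstacle here. The only point needing care is the parameter bookkeeping: confirming that the "$\gamma$" of Theorem~\ref{thm:lower-main} is taken to be $\gamma+1$, that $q=1$ lies in its range, and that the resulting $r$ equals $p'$.
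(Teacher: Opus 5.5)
Your proposal is correct and is essentially the paper's own argument. The paper likewise applies Theorem~\ref{thm:lower-main} with $q=1$ and kernel parameter $\gamma+1$, notes that $r=p'$ so the resulting criterion ($F_{\mu,\gamma+1}\in H^{p'}$, equivalently $b\in\ell^{p'}$) and norm coincide with those of Theorem~\ref{thm:target-infty}, and chains the two equivalences, with your check $\sigma=\gamma+1-1/p'=\gamma+1/p=s$ (so $d_j=b_j$) being exactly the bookkeeping the paper leaves implicit.
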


\begin{proof}
    Take  $q=1$ and
kernel parameter $\gamma+1$ in Theorem~\ref{thm:lower-main},  then 
the criterion and its norm are those of
Theorem~\ref{thm:target-infty}.
\end{proof}

\begin{example}\label{ex:target-carleson-fails}
Suppose  $1<p<\infty$ and $\gamma>0$, and put $s=\gamma+1/p$.
For $d\mu(t)=(1-t)^{s-1}\,dt$, it is clear that 
\[
 \mu([t,1))=\frac{(1-t)^s}{s},\qquad
 b_j=2^{js}\mu([a_j,1))=\frac1s\quad(j\ge0).
\]
Thus $\mu$ is an $s$-Carleson measure, but $b\notin\ell^{p'}$.
Theorem~\ref{thm:target-infty} implies that
$\mathcal C_{\mu,\gamma}:H^p\to H^\infty$ is not bounded.
In particular, the  Carleson criterion in Theorem \ref{thm:upper-improved} cannot be
extended to $q=\infty$ when $1<p<\infty$ without the additional
summability requirement.
\end{example}

\section{Sharpness and threshold examples}\label{sec:classical}

\begin{proposition}\label{prop:strong-condition}
Let $1\le q<p<\infty$, $r=pq/(p-q)$, and $s=1-1/r$. Then
\begin{equation}\label{eq:integral-sum-equivalence}
 \int_0^1\frac{d\mu(t)}{(1-t)^s}<\infty
 \quad\Longleftrightarrow\quad
 \sum_{n=0}^\infty(n+1)^{-1/r}\mu_n<\infty.
\end{equation}
Either condition implies that $\mathcal C_\mu:H^p\to H^q$ bounded, but neither is necessary.
\end{proposition}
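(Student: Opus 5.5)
The proof has three parts: the equivalence \eqref{eq:integral-sum-equivalence}, sufficiency via Theorem~\ref{thm:lower-main}, and non-necessity via an explicit measure.

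\emph{The equivalence.} Since $0<s<1$, use Tonelli to write
\[
 \sum_{n\ge0}(n+1)^{-1/r}\mu_n=\int_{[0,1)}\Phi(t)\,d\mu(t),
 \qquad \Phi(t)=\sum_{n\ge0}(n+1)^{-1/r}t^n .
\]
Because $1/r<1$, the standard estimate $\sum_n (n+1)^{\beta-1}t^n\asymp(1-t)^{-\beta}$ for $\beta>0$ and $0\le t<1$ applies with $\beta=1-1/r=s$. This gives $\Phi(t)\asymp(1-t)^{-s}$ uniformly on $[0,1)$, and the two quantities in \eqref{eq:integral-sum-equivalence} are comparable. To justify the kernel estimate, compare with $\int_0^\infty x^{\beta-1}t^x\,dx=\Gamma(\beta)(\log(1/t))^{-\beta}$ for $t$ near $1$; for $t$ away from $1$ both sides are bounded above and below.

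\emph{Sufficiency.} Apply Theorem~\ref{thm:lower-main} with $\gamma=1$, so that $\sigma=1-1/r=s$. It suffices to show that $\int_0^1 d\mu/(1-t)^s<\infty$ implies \eqref{eq:tail-integral-main}. I would work with the dyadic form (iii). Set $\nu_k=\mu([a_k,a_{k+1}))$, so that $m_j=\sum_{k\ge j}\nu_k$ and the hypothesis reads $\sum_k 2^{ks}\nu_k<\infty$. Then
\[
 d_j=2^{js}m_j=\sum_{k\ge j}2^{-(k-j)s}\,\bigl(2^{ks}\nu_k\bigr),
\]
which is a convolution of an $\ell^1$ sequence with the $\ell^1$ kernel $2^{-\ell s}$. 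Hence $d\in\ell^1\subset\ell^r$, since $r>1$. Here $r>1$ because $1/r=1/q-1/p<1/q\le1$. So (iii) holds and $\mathcal C_\mu:H^p\to H^q$ is bounded.

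\emph{Non-necessity.} I need a measure with $d\in\ell^r\setminus\ell^1$. Place atoms at the points $a_k$ with masses chosen so that $m_j\asymp 2^{-js}(j+1)^{-1}$. For instance, take $\nu_k=m_k-m_{k+1}$ with $m_k=2^{-ks}/(k+1)$, which is positive because $m_k$ is decreasing. Then $d_j=1/(j+1)$, which lies in $\ell^r$ because $r>1$ but is not in $\ell^1$. The hypothesis sum $\sum_k 2^{ks}\nu_k$ is comparable to $\sum_k 2^{ks}m_k=\sum_k 1/(k+1)=\infty$: indeed $\nu_k\ge m_k(1-2^{-s})$, since $m_{k+1}\le 2^{-s}m_k$. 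So the integral diverges, yet the operator is bounded by Theorem~\ref{thm:lower-main}. This simultaneously establishes the strictness asserted in Proposition~\ref{prop:strictness}.

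The main obstacle is the uniform kernel asymptotic $\Phi(t)\asymp(1-t)^{-s}$. It is classical but must be stated carefully, with constants uniform on all of $[0,1)$. Everything else reduces to dyadic bookkeeping with the tools already in the paper.
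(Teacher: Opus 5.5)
Your proposal is correct and follows essentially the paper's argument. For the equivalence you use Tonelli with $\sum_n(n+1)^{-1/r}t^n\asymp(1-t)^{-s}$; the paper obtains that uniform bound in one line from the binomial series $(1-t)^{-s}=\sum_n A_n^s t^n$ with $A_n^s\asymp(n+1)^{s-1}$, so your ``main obstacle'' needs no separate argument. For sufficiency you sum dyadically to get $d\in\ell^1\subset\ell^r$ and apply Theorem~\ref{thm:lower-main} with $\gamma=1$, exactly as the paper does. For non-necessity the paper cites the absolutely continuous measure $(1-t)^{s-1}\bigl(\log\frac{e}{1-t}\bigr)^{-\beta}dt$ of Proposition~\ref{prop:strictness}. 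Your atomic measure with $d_j=1/(j+1)$ is the discrete analogue of the case $\beta=1$ and works equally well.
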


\begin{proof}
Since
\[
 (1-t)^{-s}=\sum_{n=0}^\infty
 \frac{\Gamma(n+s)}{\Gamma(s)\Gamma(n+1)}t^n
\]
with coefficients comparable to $(n+1)^{s-1}=(n+1)^{-1/r}$,
Fubini's theorem gives \eqref{eq:integral-sum-equivalence}.
The implication to boundedness was obtained in
\cite[Theorem~3.15(ii)]{Blasco2024}. It also follows directly from
Theorem \ref{thm:lower-main} as follows. Suppose
\[
 I_s:=\int_{[0,1)}(1-t)^{-s}\,d\mu(t)<\infty,
 \qquad s=1-\frac1r>0,
\]
and put $d_j=2^{js}\mu([a_j,1))$, where $a_j=1-2^{-j}$.
By Fubini's theorem  we have
\begin{align*}
 \sum_{j=0}^\infty d_j
 &=\int_{[0,1)}
   \left(\sum_{j:\,a_j\le t}2^{js}\right)d\mu(t)\\
 &\le C_s\int_{[0,1)}(1-t)^{-s}\,d\mu(t)=C_s I_s.
\end{align*}
Consequently, $d\in\ell^1\subset\ell^r$, and
Corollary~\ref{thm:complete-classical} implies boundedness (and compactness).
Proposition~\ref{prop:strictness} below proves that the condition is
not necessary.
\end{proof}

\begin{proposition}\label{prop:strictness}
Let $1\le q<p<\infty$, $r=pq/(p-q)$ and $s=1-1/r$. Fix $1/r<\beta\le1$ and define
\begin{equation*}\label{eq:log-measure}
 d\mu_\beta(t)
 =
 (1-t)^{s-1}
 \left(\log\frac{e}{1-t}\right)^{-\beta}\,dt.
\end{equation*}
Then $\mathcal C_{\mu_\beta}:H^p\to H^q$ is bounded and compact, while
\[
 \int_0^1\frac{d\mu_\beta(t)}{(1-t)^s}=\infty.
\]
Thus \eqref{eq:strong-integral} and \eqref{eq:blasco-sufficient} are strictly stronger than the exact lower-triangle condition.
\end{proposition}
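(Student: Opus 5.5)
The plan is to reduce both assertions to the dyadic sequence $d_j=2^{js}\mu_\beta([a_j,1))$, where $\sigma=1-1/r=s$ for $\gamma=1$. By Theorem~\ref{thm:lower-main} with $\gamma=1$ (equivalently Corollary~\ref{thm:complete-classical}(b)), boundedness and compactness of $\mathcal C_{\mu_\beta}:H^p\to H^q$ are both equivalent to $\{d_j\}\in\ell^r$. So the first task is an asymptotic for the tail $\mu_\beta([t,1))$. Substituting $x=1-t$ gives
\[
 \mu_\beta([t,1))=\int_0^{1-t}x^{s-1}\Bigl(\log\frac ex\Bigr)^{-\beta}\,dx .
\]
Since $s>0$ and the logarithmic factor is slowly varying, I expect $\mu_\beta([t,1))\asymp (1-t)^s\bigl(\log\frac{e}{1-t}\bigr)^{-\beta}$. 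For the upper bound, split at $x=(1-t)^2$, or simply note that $x^{s-1}(\log(e/x))^{-\beta}$ is comparable on dyadic blocks and sum a geometric series. The lower bound comes from integrating over $[(1-t)/2,\,1-t]$.

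Hence $d_j\asymp (1+j)^{-\beta}$, uniformly in $j\ge0$. Then $\sum_j d_j^r\asymp\sum_j (1+j)^{-r\beta}<\infty$ exactly because $\beta>1/r$. Theorem~\ref{thm:lower-main} therefore gives boundedness and compactness. As a cross-check, I would verify the integral form \eqref{eq:tail-integral-main}: its integrand is $\asymp(\log\frac{e}{1-t})^{-r\beta}/(1-t)$, which is integrable iff $r\beta>1$.

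For the divergence, compute directly:
\[
 \int_0^1\frac{d\mu_\beta(t)}{(1-t)^s}=\int_0^1\frac{dx}{x\,(\log(e/x))^{\beta}} .
\]
With $u=\log(e/x)$, this becomes $\int_1^\infty u^{-\beta}\,du=\infty$ since $\beta\le1$. By Proposition~\ref{prop:strong-condition}, \eqref{eq:blasco-sufficient} then fails as well, which gives the strictness claim.

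The only step needing care is the two-sided tail asymptotic, specifically its upper bound. A clean way to do it is dyadic: on $[2^{-k-1},2^{-k}]$ the integrand is $\asymp 2^{-k(s-1)}(1+k)^{-\beta}$, so
\[
 \mu_\beta([a_j,1))\asymp\sum_{k\ge j}2^{-ks}(1+k)^{-\beta}\asymp 2^{-js}(1+j)^{-\beta},
\]
the last comparison holding because $s>0$ makes the series dominated by its first term. Note also that $1/r<1$ (as $q<p$ finite, $r>1$ when $q\ge1$... in fact $r>q\ge1$), so the range of $\beta$ is nonempty.
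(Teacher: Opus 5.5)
Your proposal is correct and follows essentially the paper's route. You derive the tail asymptotic $\mu_\beta([t,1))\asymp(1-t)^s\bigl(\log\frac{e}{1-t}\bigr)^{-\beta}$, apply Theorem~\ref{thm:lower-main} with $\gamma=1$, and get the divergence from the substitution $u=\log(e/x)$; you use the dyadic form (iii) where the paper uses the integral form (iv), which Lemma~\ref{lem:potential} makes interchangeable, and your dyadic block computation supplies the justification the paper compresses into ``a direct integration''.
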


\begin{proof}
A direct integration gives, as $t\to1^-$,
\begin{equation*}\label{eq:tail-log}
 \mu_\beta([t,1))
 \asymp
 (1-t)^s\left(\log\frac{e}{1-t}\right)^{-\beta}.
\end{equation*}
 Hence the exact tail condition on $(0,1)$ is
\[
\int_{0}^1
 \left(\log\frac{e}{1-t}\right)^{-\beta r}
 \frac{dt}{1-t}<\infty,
\]
which holds exactly when $\beta r>1$.  On the other hand,
\[
 \int_{0}^1\frac{d\mu_\beta(t)}{(1-t)^s}
 =\int_{0}^1
 \frac1{1-t}
 \left(\log\frac{e}{1-t}\right)^{-\beta}\,dt,
\]
which diverges for $\beta\le1$.
\end{proof}

\begin{example}\label{ex:quasi-critical}
Let $0<q<p\le\infty$, put $1/r=1/q-1/p$, and take
$\gamma=1/r$. For $\beta>0$, define the finite positive measure
\[
 d\mu_\beta(t)
 =\frac{\beta}{(1-t)[\log(e/(1-t))]^{\beta+1}}\,dt.
\]
The substitution $x=\log(e/(1-t))$ gives exactly
\[
 \mu_\beta([t,1))=[\log(e/(1-t))]^{-\beta},
 \qquad \mu_\beta([0,1))=1.
\]
Consequently,
\[
 \int_0^1\frac{\mu_\beta([t,1))^r}{1-t}\,dt
 =\int_1^\infty x^{-\beta r}\,dx.
\]
Theorem~\ref{thm:lower-main} implies that
$\mathcal C_{\mu_\beta,\gamma}:H^p\to H^q$ is bounded, equivalently
compact, precisely when $\beta>1/r$. This conclusion remains valid
when $q<1$ and when $r<1$. Finiteness of $\mu$ alone does not
suffice on the critical line $\gamma=1/r$.
\end{example}

\section{Concluding remarks}\label{sec:conclusion}

Theorems~\ref{thm:upper-improved}, \ref{thm:lower-main}, and
\ref{thm:target-infty} separate three mapping regimes for the
generalized Ces\`aro operators. For finite target exponents in the
upper triangle, and for $0<p\le1$ with target $H^\infty$,
boundedness is governed by a pointwise Carleson condition.
For $0< q<p\le\infty$, it is governed instead by an
$\ell^r$ condition on the normalized tails, where $1/r=1/q-1/p$.
The generating-function and moment conditions are equivalent
forms of the same criterion, and $p=\infty$ is included by
setting $r=q$.

For the target endpoint $H^p\to H^\infty$, $1<p<\infty$,
the summability exponent is $p'$ and the relevant generating
function is $F_{\mu,\gamma+1}$. Thus this endpoint is not obtained
by simply substituting $q=\infty$ in the Carleson criterion of
Theorem~\ref{thm:upper-improved}. Boundedness and compactness
coincide both in the lower triangle and for this target endpoint.
For $\gamma=1$, Corollary~\ref{thm:complete-classical} gives a complete
boundedness classification for all  $0< p,q\le\infty$. 

 Further questions concern
characterizations for complex measures, where positivity is no
longer available, and analogous results on weighted Bergman and
mixed norm spaces.

\section*{Conflicts of Interest}
The authors declare that there is no conflict of interest.




\section*{Availability of data and materials}
Data sharing is not applicable to this article as no datasets were generated or analysed during
the current study: the article describes entirely theoretical research.

\section*{Funding}
The first author was supported by the  National Natural Science Foundation of China (No.1260014198). The second author was supported by  the  National Natural Science Foundation of China (No.12601226).

\end{document}